\apptocmd{\sloppy}{\hbadness 10000\relax}{}{}
\title{On Steenbrink vanishing for rational singularities in positive characteristic}
\author{Tatsuro Kawakami}
\address{Graduate School of Mathematical Sciences, University of Tokyo, 3-8-1 Komaba,
Meguro-ku, Tokyo 153-8914, Japan}
\email{tatsurokawakami0@gmail.com}
\def\phi{\varphi}
\def\epsilon{\varepsilon}
\def\onto{\relbar\joinrel\twoheadrightarrow}
\def\log{\operatorname{log}}
\def\Spec{\operatorname{Spec}}
\def\Supp{\operatorname{Supp}}
\def\Ker{\operatorname{ker}}
\def\m{{\mathfrak m}}
\newcommand{\Q}{\mathbb{Q}}
\newcommand{\Z}{\mathbb{Z}}
\newcommand{\sO}{\mathcal{O}}
\newcommand{\sHom}{\mathcal{H}\! \mathit{om}}
\newsavebox{\pullback}
\sbox\pullback{%
\begin{tikzpicture}%
\draw (0,0) -- (1ex,0ex);%
\draw (1ex,0ex) -- (1ex,1ex);%
\end{tikzpicture}}
\newsavebox{\pullbackdl}
\sbox\pullbackdl{%
\begin{tikzpicture}%
\draw (-1ex,0ex) -- (0ex,0ex);%
\draw (0ex,-1ex) -- (0ex,0ex);%
\end{tikzpicture}}
\newsavebox{\pushoutdr}
\sbox\pushoutdr{%
\begin{tikzpicture}%
\draw (-1ex,-1ex) -- (-1ex,0ex);%
\draw (-1ex,0ex) -- (0ex,0ex);%
\end{tikzpicture}}
\theoremstyle{plain}
\newtheorem{thm}{Theorem}[section] 
\newtheorem{prop}[thm]{Proposition}
\newtheorem{lem}[thm]{Lemma}
\theoremstyle{definition} 
\newtheorem{defn}[thm]{Definition}
\newtheorem{eg}[thm]{Example} 
\theoremstyle{remark}
\newtheorem{rem}[thm]{Remark}
\newtheorem{defn and notation}[thm]{Definition and Notation}
\theoremstyle{plain}
\newtheorem{theo}{Theorem}
\numberwithin{equation}{thm}
\keywords{Differential forms; Singularities; Vanishing theorem; Positive characteristic}
\subjclass[2020]{14F10, 13A35, 14F17, 14B05}
\begin{document}
\tolerance = 9999

\begin{abstract}
We show a special case of Steenbrink vanishing for rational singularities in positive characteristic.
As a consequence, we prove that strongly $F$-regular threefolds and klt threefolds in characteristic $p>41$ satisfy Steenbrink vanishing.
\end{abstract}

\maketitle
\markboth{Tatsuro Kawakami}{STEENBRINK VANISHING IN POSITIVE CHARACTERISTIC}

\section{Introduction}

The Steenbrink vanishing discussed in this paper is a local property of algebraic varieties, defined as follows:
\begin{defn}[Steenbrink vanishing]\label{def:Steenbrink vanishing}
  Let $X$ be a normal variety over a perfect field, and let $\pi\colon Y\to X$ be a log resolution with reduced $\pi$-exceptional divisor $E$.
  Then 
  \[
  R^j\pi_{*}\Omega^i_Y(\log E)(-E)=0
  \]
  for all integers $i,j$ satisfying $i+j>\dim X$.
\end{defn}

Here, $R^j\pi_{*}\Omega^i_Y(\log E)(-E)$ denotes $R^j\pi_{*}\left(\Omega^i_Y(\log E)\otimes\sO_Y(-E) \right)$.
Steenbrink vanishing includes Grauert--Riemenschneider vanishing as a special case when $i=\dim X$.
While Grauert–Riemenschneider vanishing can be viewed as a local version of Kodaira vanishing, Steenbrink vanishing can be regarded as a local analogue of Akizuki–Nakano vanishing.

In characteristic zero, Steenbrink vanishing was proven by Steenbrink himself \cite[Theorem 2(b)]{Steenbrink} without any assumptions on the singularities (see also \cite[Theorem 3.2]{Kovacs(Steenbrink)}).
Moreover, when $X$ is Du Bois, such vanishing is known to hold for a wider range of $i$ and $j$ (see \cite[Theorem 14.1]{GKKP} and \cite[Theorem 1.1]{Kovacs(Steenbrink)}).

In this paper, we study Steenbrink vanishing in positive characteristic.
Since the proof in characteristic zero relies on mixed Hodge theory, similar arguments do not apply in positive characteristic.
Indeed, for any positive integer $d\geq 3$ and any prime number $p$, there exists a $d$-dimensional isolated singularity $X$ over an algebraically closed field of characteristic $p$ that fails to satisfy Steenbrink vanishing.
Such a counterexample is constructed by taking the affine cone over a smooth projective variety that violates Akizuki--Nakano vanishing (see Example \ref{eg:counterex} for details).

On the other hand, for the development of birational geometry in positive characteristic, it is both interesting and important to identify natural and reasonably mild classes of singularities for which Steenbrink vanishing holds.

Thus, this paper investigates classes of singularities where Steenbrink vanishing is valid.
In particular, we show that Steenbrink vanishing partially holds for rational singularities (Definition \ref{def:rational singularities}); moreover, for three-dimensional $F$-injective rational singularities, Steenbrink vanishing holds in all ranges.

\begin{theo}\label{Introthm:F-inj}
    Let $X$ be a normal variety with rational singularities over a perfect field of positive characteristic with $d\coloneqq \dim X\geq 3$. When $d=3$, we assume that $X$ is $F$-injective.
    Then for every log resolution $\pi\colon Y\to X$ whose reduced $\pi$-exceptional divisor $E$ supports a $\pi$-ample divisor, we have
    \[
    R^{d-1}\pi_{*}\Omega^{d-1}_Y(\log E)(-E)=0.
    \]
\end{theo}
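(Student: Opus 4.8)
The plan is to first rewrite the sheaf in a self-dual form. Since $E$ is simple normal crossing, $\Omega^1_Y(\log E)$ is locally free of rank $d$ with determinant $\omega_Y(E)$, so the wedge pairing $\Omega^1_Y(\log E)\otimes\Omega^{d-1}_Y(\log E)\to\omega_Y(E)$ gives a canonical isomorphism
\[
\Omega^{d-1}_Y(\log E)(-E)\;\cong\;\sHom_Y\!\left(\Omega^1_Y(\log E),\,\omega_Y\right)\;\cong\;T_Y(-\log E)\otimes\omega_Y .
\]
Thus the target becomes the vanishing of $R^{d-1}\pi_*\sHom_Y(\Omega^1_Y(\log E),\omega_Y)$, which is the top nonzero direct image, as the fibres of $\pi$ have dimension $\le d-1$.

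Next I would apply relative Grothendieck duality. Because $X$ has rational singularities it is Cohen--Macaulay, so its dualizing complex is $\omega_X[d]$ and $\pi^!\omega_X[d]\cong\omega_Y[d]$; duality then yields $R\pi_*\sHom_Y(\Omega^1_Y(\log E),\omega_Y)\cong R\sHom_X(R\pi_*\Omega^1_Y(\log E),\omega_X)$. Writing $K=R\pi_*\Omega^1_Y(\log E)$, whose cohomology sheaves sit in degrees $0,\dots,d-1$, and running the hyper-$\sExt$ spectral sequence together with $\sExt^p_X(-,\omega_X)=0$ for $p>d$, one checks that the only contributions to $\mathcal H^{d-1}$ of the right-hand side come from the antidiagonal entries $\sExt^{d-1}_X(\pi_*\Omega^1_Y(\log E),\omega_X)$ and $\sExt^{d}_X(R^1\pi_*\Omega^1_Y(\log E),\omega_X)$. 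By local duality at each closed point these are Matlis dual to $H^1_\m(\pi_*\Omega^1_Y(\log E))$ and $H^0_\m(R^1\pi_*\Omega^1_Y(\log E))$. Hence it suffices to prove, at every closed point,
\[
\operatorname{depth}_\m \pi_*\Omega^1_Y(\log E)\ge 2 \qquad\text{and}\qquad H^0_\m\!\left(R^1\pi_*\Omega^1_Y(\log E)\right)=0 .
\]

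To attack these I would feed in the residue sequence $0\to\Omega^1_Y\to\Omega^1_Y(\log E)\to\bigoplus_i\sO_{E_i}\to 0$ together with Grauert--Riemenschneider vanishing $R^{j}\pi_*\omega_Y=0$ $(j>0)$ and $\pi_*\omega_Y=\omega_X$, which for rational singularities follows from $R\pi_*\sO_Y=\sO_X$ by the same duality. The first condition says $\pi_*\Omega^1_Y(\log E)$ is $S_2$ (it is locally free in codimension one, agreeing with $\Omega^1_X$ on the regular locus), and the second says $R^1\pi_*\Omega^1_Y(\log E)$ carries no embedded section at a singular point. This is where the remaining hypotheses enter: the $\pi$-ample divisor supported on $E$ lets me twist and invoke relative Serre vanishing to localize both questions onto the exceptional divisor, after which the Cartier operator on $Y$, relating $\Omega^1$-cohomology to Frobenius pushforwards, converts them into Frobenius-stable local cohomology statements; in dimension three the injectivity needed to close the argument is supplied by the $F$-injective hypothesis.

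The main obstacle is precisely this last step: controlling the depth of $\pi_*\Omega^1_Y(\log E)$ and the support of $R^1\pi_*\Omega^1_Y(\log E)$ in positive characteristic, where no Hodge-theoretic degeneration is available to force the relevant $\Omega^1$-cohomology to behave. I expect the dimension-three case to be the genuinely critical one, since there the two local-cohomology groups live exactly in the range where Frobenius (hence $F$-injectivity), rather than a purely numerical codimension count, is required.
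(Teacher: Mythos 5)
Your opening reduction is essentially correct: the wedge pairing gives $\Omega^{d-1}_Y(\log E)(-E)\cong\sHom_{\sO_Y}(\Omega^1_Y(\log E),\omega_Y)$, and Grothendieck duality ($X$ being Cohen--Macaulay), the hyper-Ext spectral sequence, and local duality do reduce the theorem to the vanishing of $H^1_{\m}(\pi_{*}\Omega^1_Y(\log E))$ and $H^0_{\m}(R^1\pi_{*}\Omega^1_Y(\log E))$ at closed points. Two caveats, though. First, these two conditions are a priori \emph{strictly stronger} than the conclusion: the abutment can also vanish through the differential $d_2\colon \sExt^{d-2}_X(\pi_{*}\Omega^1_Y(\log E),\omega_X)\to \sExt^{d}_X(R^1\pi_{*}\Omega^1_Y(\log E),\omega_X)$, so you have committed yourself to proving more than the theorem asserts. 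Second, your first condition is exactly the $S_2$/reflexivity property of $\pi_{*}\Omega^1_Y(\log E)$, i.e.\ the logarithmic extension problem for one-forms; this is a hard theorem in its own right (cf.\ the paper's closing remark and the references there), not something that falls out of the residue sequence and Grauert--Riemenschneider vanishing.

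The genuine gap is everything after that reduction, which is precisely where all the positive-characteristic content lives. The sentence about twisting by the $\pi$-ample divisor, invoking relative Serre vanishing, and letting ``the Cartier operator'' convert the questions into ``Frobenius-stable local cohomology statements'' is a restatement of the difficulty, not an argument, and you concede as much in your last paragraph. What is missing is the actual mechanism: the paper introduces the iterated sheaves $Z_n\Omega^i_Y(\log E)(p^nA)$, $B_n\Omega^i_Y(\log E)(p^nA)$, $G_n\Omega^i_Y(\log E)(p^nA)$ with $\Q$-divisor twists growing like $p^nA$ in step with Frobenius pushforwards --- a single Serre vanishing cannot work, one needs the compatibility \eqref{digram:Omega and G} to transfer the vanishing $R^{d-1}\pi_{*}\Omega^i_Y(\log E)(p^nA)=0$ for $n\gg0$ into the statement that $C^{-1}_n$ is the \emph{zero map} on $R^{d-1}\pi_{*}\Omega^i_Y(\log E)(-E)$ (Theorem \ref{thm:steenbring vanishing up to nilpotent}). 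This is then played against \emph{injectivity} of the same maps: for $d\geq 4$ injectivity follows from $R^{j}\pi_{*}\omega_Y=0$ for $j>0$ (this, and not Cohen--Macaulayness, is how rational singularities enter; see Proposition \ref{prop:higherdim}), while for $d=3$ it requires $F$-injectivity, passed through the Witt-vector sheaves $\mathcal{B}_{n,X}$, the isomorphism $R\pi_{*}B_n\Omega^1_Y\cong\mathcal{B}_{n,X}$ (which uses $R\pi_{*}\sO_Y=\sO_X$), and the duality $\sHom_{\sO_Y}(B_n\Omega^1_Y(\log E),\omega_Y)\cong F^n_{*}\Omega^{d-1}_Y(\log E)(-E)/Z_n\Omega^{d-1}_Y(\log E)(-E)$ (Proposition \ref{prop:threedim}). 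Zero map plus injective map forces the module to vanish. Your proposal contains neither half of this architecture; in particular, nothing in your sketch explains how $F$-injectivity of $X$ would ever act on $R^1\pi_{*}\Omega^1_Y(\log E)$, nor why rational singularities suffice when $d\geq 4$, so the proof cannot be completed along the lines you describe without importing essentially the entire apparatus of the paper.
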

\begin{rem}
    \begin{enumerate}
       \item The condition that $E$ supports a $\pi$-ample divisor is automatically satisfied when $X$ is $\Q$-factorial. In general, if log resolutions exist in dimension $d$, then 
        we can find such a log resolution \cite[Theorem 1.1]{Kollar-Witaszek}.
        \item In the case $d=2$, we refer to \cite{Kaw6}. Note that when $d=2$, the above vanishing falls outside the scope of Steenbrink vanishing, since $(d-1)+(d-1)=d$. 
        \item Without any assumptions on the singularities, the theorem does not hold (see Example \ref{eg:counterex}).
    \end{enumerate}
\end{rem}

To the best of our knowledge, Theorem \ref{Introthm:F-inj} is new even when $X$ is smooth. Note that smooth varieties have rational singularities in all characteristics \cite{CR11,CR15}.

\subsection{Strongly F-regular threefolds}
Combining Theorem \ref{Introthm:F-inj} with the fact that strongly $F$-regular threefolds have rational singularities \cite[Theorem 3.5]{Baudin-Kawakami-Rysler}, we obtain the following result:

\begin{theo}\label{Introthm:SFR}
    Let $X$ be a strongly $F$-regular threefold over a perfect field of positive characteristic.
    Then 
    for every log resolution $\pi\colon Y\to X$ whose reduced $\pi$-exceptional divisor $E$ supports a $\pi$-ample divisor,
    we have
    \[
    R^j\pi_{*}\Omega^{i}_Y(\log E)(-E)=0
    \]
    for all integers $i,j$ satisfying $i+j>3$.
    In particular, a $\Q$-factorial strongly $F$-regular threefold satisfies Steenbrink vanishing.
\end{theo}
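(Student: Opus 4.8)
The plan is to reduce to the vanishing results already in hand by a short case analysis, since for $d=3$ only three pairs $(i,j)$ can occur. As $Y$ is a smooth threefold and $\pi$ is birational, the fibres of $\pi$ have dimension at most $2$, so $R^j\pi_*=0$ for $j\geq 3$; similarly $\Omega^i_Y=0$ for $i>3$. Enumerating the pairs with $0\le i\le 3$, $0\le j\le 2$ and $i+j>3$ leaves exactly $(i,j)\in\{(2,2),(3,1),(3,2)\}$, so it suffices to establish the displayed vanishing for these three pairs.

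First I would record the structural input on $X$: a strongly $F$-regular variety is $F$-pure, hence $F$-injective, and by \cite[Theorem 3.5]{Baudin-Kawakami-Rysler} it has rational singularities. Thus $X$ satisfies all the hypotheses of Theorem \ref{Introthm:F-inj} with $d=3$, including the $F$-injectivity assumption imposed in that dimension. Applying that theorem immediately gives $R^2\pi_*\Omega^2_Y(\log E)(-E)=0$, which settles the pair $(2,2)$.

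Next I would treat the two pairs with $i=d=3$ through Grauert--Riemenschneider vanishing. The key observation is the canonical identification $\Omega^3_Y(\log E)\cong \omega_Y\otimes\sO_Y(E)$ for the top exterior power of the sheaf of log differentials along the simple normal crossing divisor $E$; twisting by $\sO_Y(-E)$ yields $\Omega^3_Y(\log E)(-E)\cong\omega_Y$. Hence the cases $(3,1)$ and $(3,2)$ amount to $R^j\pi_*\omega_Y=0$ for $j=1,2$. To obtain this I would use that $X$, being strongly $F$-regular, is Cohen--Macaulay, together with rationality in the form $R\pi_*\sO_Y\cong\sO_X$; Grothendieck duality then gives $R\pi_*\omega_Y[d]\cong R\sHom_X(R\pi_*\sO_Y,\omega^\bullet_X)\cong\omega^\bullet_X\cong\omega_X[d]$, so $R\pi_*\omega_Y$ is concentrated in degree $0$ and the higher direct images vanish.

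Having covered all three pairs, the displayed vanishing holds for every $i+j>3$. For the final assertion I would invoke the remark following Theorem \ref{Introthm:F-inj}: when $X$ is $\Q$-factorial the condition that $E$ support a $\pi$-ample divisor is automatic, so the vanishing holds for \emph{every} log resolution, which is precisely Steenbrink vanishing in the sense of Definition \ref{def:Steenbrink vanishing}. I expect no serious obstacle here, since the essential content lies in Theorem \ref{Introthm:F-inj}; the only point requiring genuine care is supplying the Cohen--Macaulay hypothesis needed for the duality step, and this is furnished for free by strong $F$-regularity.
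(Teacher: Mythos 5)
Your proposal is correct and follows essentially the same route as the paper: deduce $F$-injectivity and rational singularities (via \cite[Theorem 3.5]{Baudin-Kawakami-Rysler}) from strong $F$-regularity and apply Theorem \ref{Introthm:F-inj} for the essential pair $(i,j)=(2,2)$. The paper's own proof is just these two observations, leaving implicit what you spell out explicitly — the enumeration of pairs, the identification $\Omega^3_Y(\log E)(-E)\cong\omega_Y$, and Grauert--Riemenschneider vanishing for $\omega_Y$ from rationality (which the paper covers in Remark \ref{rem:rational} and the citation of \cite[Corollary 1.4]{Baudin-Kawakami-Rysler}).
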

The essential case in Theorem \ref{Introthm:SFR} is $R^2\pi_{*}\Omega^{2}_Y(\log E)(-E)=0$.
Indeed, it is shown in \cite[Corollary 1.4]{Baudin-Kawakami-Rysler} that a milder assumption suffices to ensure the vanishing of higher direct images of the dualizing sheaf: if $X$ is a three-dimensional Noetherian excellent Cohen–Macaulay scheme of klt type, then $R^j\pi_{*}\omega_Y=0$ holds for all $j>0$.

\subsection{Klt threefolds in large characteristic}

It is known that three-dimensional klt singularities are rational when the characteristic $p>5$ \cite{ABL,Bernasconi-Kollar}.
Moreover, $\mathbb{Q}$-factorial three-dimensional klt singularities over a perfect field of characteristic $p>41$ are quasi-$F$-pure, a milder notion than $F$-purity \cite[Theorem A]{KTTWYY2}.

In Theorem \ref{Introthm:F-inj}, the assumption of $F$-injectivity can be weakened to the assumption of being quasi-$F$-injective (Definition \ref{def:quasi-F-injective}), a broader class related to quasi-$F$-purity.
As a consequence, we obtain Steenbrink vanishing for $\mathbb{Q}$-factorial three-dimensional klt singularities in characteristic $p>41$.

\begin{theo}\label{Introthm:klt}
    Let $X$ be a $\Q$-factorial klt threefold over a perfect field of characteristic $p>41$.
    Then $X$ satisfies Steenbrink vanishing, i.e., 
    for every log resolution $\pi\colon Y\to X$ with reduced $\pi$-exceptional divisor $E$,
    we have
    \[
    R^j\pi_{*}\Omega^{i}_Y(\log E)(-E)=0
    \]
    for all integers $i,j$ satisfying $i+j>3$.
\end{theo}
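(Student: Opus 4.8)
The plan is to deduce Theorem~\ref{Introthm:klt} from Theorem~\ref{Introthm:F-inj} (in its quasi-$F$-injective strengthening, promised in the paragraph preceding the statement) together with the structural results quoted about three-dimensional klt singularities. Since $X$ is a $\Q$-factorial klt threefold with $p>41$, I would first invoke the cited facts: klt threefold singularities are rational for $p>5$ \cite{ABL,Bernasconi-Kollar}, and $\Q$-factorial klt threefolds in characteristic $p>41$ are quasi-$F$-pure by \cite[Theorem A]{KTTWYY2}, hence in particular quasi-$F$-injective. The $\Q$-factoriality also guarantees, via Remark (1) following Theorem~\ref{Introthm:F-inj}, that every log resolution has reduced exceptional divisor $E$ supporting a $\pi$-ample divisor, so the hypotheses of the quasi-$F$-injective version of Theorem~\ref{Introthm:F-inj} are met.

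With $d=3$, Theorem~\ref{Introthm:F-inj} then directly yields the single vanishing
\[
R^{2}\pi_{*}\Omega^{2}_Y(\log E)(-E)=0,
\]
which corresponds to the pair $(i,j)=(2,2)$ with $i+j=4>3$. The remaining task is to establish $R^j\pi_{*}\Omega^i_Y(\log E)(-E)=0$ for the other relevant pairs with $i+j>3$, namely $(i,j)=(3,1)$, $(3,2)$, $(3,3)$, $(2,3)$, and $(1,3)$. The top-degree cases with $i=3=\dim X$ are exactly Grauert--Riemenschneider vanishing, which holds for rational singularities (indeed $R^j\pi_*\omega_Y=0$ for $j>0$ follows since $X$ is Cohen--Macaulay of klt type, as noted via \cite[Corollary 1.4]{Baudin-Kawakami-Rysler}). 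For the high-$j$ cases with $j=3=\dim X$ and $i<3$, I expect the fibers of $\pi$ to have dimension at most $2$ (generic klt threefold singularities are isolated, and more generally the exceptional locus has dimension $\le 2$), so $R^3\pi_*(-)=0$ for any coherent sheaf by the bound on cohomological dimension of the fibers; this disposes of $(2,3)$ and $(1,3)$.

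The one genuinely new input is thus the $(2,2)$ vanishing, which is precisely what Theorem~\ref{Introthm:F-inj} supplies; everything else is assembled from Grauert--Riemenschneider and dimension bounds. The main obstacle I anticipate is not in this deduction but in verifying that the quasi-$F$-injective hypothesis is genuinely what Theorem~\ref{Introthm:F-inj} needs in the weakened form, and in confirming the cited characteristic thresholds ($p>5$ for rationality, $p>41$ for quasi-$F$-purity) line up so that $p>41$ covers all requirements simultaneously. I would therefore organize the proof as: (i) record that $X$ is rational and quasi-$F$-injective; (ii) apply the quasi-$F$-injective version of Theorem~\ref{Introthm:F-inj} to get $(2,2)$; (iii) dispatch the $i=3$ cases by Grauert--Riemenschneider for rational singularities; and (iv) dispatch the $j=3$ cases by the dimension bound on the fibers of $\pi$. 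Assembling these gives vanishing for all $i+j>3$, which is Steenbrink vanishing.
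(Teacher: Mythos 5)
Your proposal is correct and follows essentially the same route as the paper: the paper's proof likewise records that $X$ has rational singularities by \cite{ABL}, is quasi-$F$-pure hence quasi-$F$-injective by \cite[Theorem A]{KTTWYY2} and Remark \ref{rem:quasi-F-inj}, and then invokes Theorems \ref{Intromain:up to C} and \ref{introthm:injective action} (which together constitute the quasi-$F$-injective version of Theorem \ref{Introthm:F-inj} you use) to obtain the key $(i,j)=(2,2)$ vanishing. Your explicit treatment of the remaining cases --- $i=3$ via $\Omega^3_Y(\log E)(-E)\cong\omega_Y$ and Grauert--Riemenschneider for rational singularities, and $j=3$ via the bound on fiber dimension --- fills in details the paper leaves implicit (it only notes in the introduction that these cases are known for $p>5$), and your use of $\Q$-factoriality to guarantee the $\pi$-ample condition on $E$ matches the paper's Remark following Theorem \ref{Introthm:F-inj}.
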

Again, we remark that the essential case in the theorem is when $(i,j)=(2,2)$; the remaining cases are already known to hold when $p>5$ \cite{ABL,Bernasconi-Kollar}.

\subsection{Strategy of Proof of Theorem \ref{Introthm:F-inj}}

The proofs of the main theorems consist of two steps.
The first step is to establish the following Steenbrink-type vanishing, up to the action of the inverse Cartier operator.

\begin{theo}[Theorem \ref{thm:steenbring vanishing up to nilpotent}]\label{Intromain:up to C} 
Let $X$ be a normal variety over a perfect field of positive characteristic with $d\coloneqq\dim X \geq 2$, and let $\pi\colon Y\to X$ be a log resolution whose reduced $\pi$-exceptional divisor $E$ supports a $\pi$-ample divisor.
Then $R^{d-1}\pi_{*}\Omega^i_Y(\log E)(-E)$ is annihilated by the action of the inverse iterated Cartier operator for all $i\geq 0$. \end{theo}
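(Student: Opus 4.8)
The plan is to realize $M\coloneqq R^{d-1}\pi_{*}\Omega^{i}_{Y}(\log E)(-E)$ as a module over the inverse Cartier operator and to show that the $e$-th iterate of this operator vanishes on $M$ for $e\gg 0$. The starting point is the twisted logarithmic Cartier isomorphism
\[
C^{-1}\colon \Omega^{i}_{Y}(\log E)(-E)\xrightarrow{\ \sim\ }\mathcal{H}^{i}\bigl(F_{*}\Omega^{\bullet}_{Y}(\log E)(-pE)\bigr),
\]
obtained by tensoring the usual log Cartier isomorphism with $\sO_{Y}(-E)$ and using $F^{*}\sO_{Y}(-E)=\sO_{Y}(-pE)$ together with the projection formula; iterating gives $\Omega^{i}_{Y}(\log E)(-E)\cong \mathcal{H}^{i}(F^{e}_{*}\Omega^{\bullet}_{Y}(\log E)(-p^{e}E))$. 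Since $Y$ is smooth and $E$ is SNC, all sheaves involved are locally free, and since $\pi$ is birational we have $R^{d}\pi_{*}=0$ on coherent sheaves. The latter is what makes the construction work in degree $d-1$: the canonical filtration of the Frobenius push-forward gives exact sequences $0\to\mathcal{B}^{i}_{(e)}\to\mathcal{Z}^{i}_{(e)}\to\mathcal{H}^{i}_{(e)}\to 0$ and $0\to \mathcal{Z}^{i}_{(e)}\to F^{e}_{*}\Omega^{i}_{Y}(\log E)(-p^{e}E)\to\mathcal{B}^{i+1}_{(e)}\to 0$, and applying $R^{\bullet}\pi_{*}$ the vanishing of $R^{d}\pi_{*}$ turns the first into a surjection $s_{e}\colon R^{d-1}\pi_{*}\mathcal{Z}^{i}_{(e)}\twoheadrightarrow R^{d-1}\pi_{*}\mathcal{H}^{i}_{(e)}\cong M$. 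Composing the inclusion $\mathcal{Z}^{i}_{(e)}\hookrightarrow F^{e}_{*}\Omega^{i}_{Y}(\log E)(-p^{e}E)$ with the natural map $\sO_{Y}(-p^{e}E)\hookrightarrow\sO_{Y}(-E)$, and using that $F^{e}_{*}$ commutes with $R^{d-1}\pi_{*}$ (as $\pi\circ F^{e}=F^{e}\circ\pi$ and Frobenius is affine), produces a map $t_{e}\colon R^{d-1}\pi_{*}\mathcal{Z}^{i}_{(e)}\to F^{e}_{X*}M$; by construction the $e$-th iterated inverse Cartier operator $C^{-e}_{M}\colon M\to F^{e}_{X*}M$ is characterized by $C^{-e}_{M}\circ s_{e}=t_{e}$.

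Because $s_{e}$ is surjective, it suffices to prove $t_{e}=0$ for $e\gg 0$, and by its definition $t_{e}$ factors through the map
\[
j_{e}\colon R^{d-1}\pi_{*}\Omega^{i}_{Y}(\log E)(-p^{e}E)\longrightarrow R^{d-1}\pi_{*}\Omega^{i}_{Y}(\log E)(-E)
\]
induced by $\sO_{Y}(-p^{e}E)\hookrightarrow\sO_{Y}(-E)$. Here the hypothesis enters: let $A$ be a $\pi$-ample divisor supported on $E$. By the negativity lemma $-A$ is effective with $\Supp(-A)=\Supp E$, so there is a fixed $m>0$ with $mA\le -E$. I first invoke relative Serre vanishing: since $A$ is $\pi$-ample and $\Omega^{i}_{Y}(\log E)$ is locally free, after enlarging $m$ I may assume $R^{d-1}\pi_{*}\Omega^{i}_{Y}(\log E)(mA)=0$ uniformly in the finitely many $i$. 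With $m$ now fixed, I choose $e$ large enough that $p^{e}E\ge m(-A)$, which is possible precisely because $E$ and $-A$ have the same support; this yields inclusions $\sO_{Y}(-p^{e}E)\hookrightarrow\sO_{Y}(mA)\hookrightarrow\sO_{Y}(-E)$. Functoriality of $R^{d-1}\pi_{*}$ then factors $j_{e}$ through $R^{d-1}\pi_{*}\Omega^{i}_{Y}(\log E)(mA)=0$, so $j_{e}=0$, hence $t_{e}=0$ and $C^{-e}_{M}=0$. As $i$ ranges over the finitely many values $0,\dots,d$ one may take $e$ uniform, proving that $M$ is annihilated by the inverse iterated Cartier operator.

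The main obstacle is the first paragraph rather than the vanishing itself: one must set up the iterated twisted log Cartier isomorphism and, more delicately, verify that the inverse Cartier operator on $M$ is genuinely well defined despite $\mathcal{H}^{i}_{(e)}$ being only a subquotient of $F^{e}_{*}\Omega^{i}_{Y}(\log E)(-p^{e}E)$ — this is exactly where the birational vanishing $R^{d}\pi_{*}=0$ is essential, forcing $s_{e}$ to be surjective and the connecting maps to degenerate in top degree. Once the operator and the identity $C^{-e}_{M}\circ s_{e}=t_{e}$ are in place, the geometric input is the clean two-step argument above: fix $m$ by Serre vanishing against the $\pi$-ample $A$, then push $e$ large to sandwich $-p^{e}E$ between $mA$ and $-E$. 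The only external facts needed are relative Serre vanishing and the negativity lemma guaranteeing that a $\pi$-ample divisor supported on $E$ is anti-effective of full support.
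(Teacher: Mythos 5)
Your geometric mechanism is exactly the paper's: choose a $\pi$-ample anti-effective divisor supported on $E$ (negativity lemma), use relative Serre vanishing to kill $R^{d-1}\pi_*$ of a sheaf twisted by a huge multiple of it, and use $R^{d}\pi_*=0$ (birationality of $\pi$) to transfer this to the quotient by boundaries; the paper twists by $p^nA$ and proves $R^{d-1}\pi_*G_n\Omega^i_Y(\log E)(p^nA)=0$, whereas you prove the comparison map $j_e$ is zero via the sandwich $-p^eE\le mA\le -E$ --- the same two inputs in equivalent bookkeeping. However, your setup of the iterated operator contains a step that fails. The claim that ``iterating gives $\Omega^i_Y(\log E)(-E)\cong\mathcal{H}^i\bigl(F^e_*\Omega^\bullet_Y(\log E)(-p^eE)\bigr)$'' is false: by your own projection-formula reasoning, $F^e_*\Omega^\bullet_Y(\log E)(-p^eE)\cong\bigl(F^e_*\Omega^\bullet_Y(\log E)\bigr)\otimes\sO_Y(-E)$, and taking cohomology of this complex performs only \emph{one} Cartier step, so $\mathcal{H}^i\cong F^{e-1}_*\Omega^i_Y(\log E)(-p^{e-1}E)$, not $\Omega^i_Y(\log E)(-E)$. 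Consequently, if $\mathcal{Z}^i_{(e)}$ and $\mathcal{B}^i_{(e)}$ are the naive cycles and boundaries of that complex, your surjection $s_e$ does not have target $M$, and the argument no longer says anything about $M$. What you need are the \emph{nested} higher cycles and boundaries $B_e\Omega^i_Y(\log E)(-p^eE)\subset Z_e\Omega^i_Y(\log E)(-p^eE)$, defined inductively by pullback along the Cartier maps (the paper's Section 3 applied with $A=-E$); for these the sequence $0\to B_e\to Z_e\xrightarrow{C_e}\Omega^i_Y(\log E)(-E)\to 0$ does hold, and then your plan goes through.

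A second, related inaccuracy: there is no natural operator $C^{-e}_M\colon M\to F^e_{X*}M$. The inverse Cartier operator lands in the quotient $G_e=F^e_*\Omega^i/B_e$, not in $F^e_*\Omega^i$ itself, and your characterization $C^{-e}_M\circ s_e=t_e$ is not well-posed: $t_e$ has no reason to kill $\ker s_e$, because at the sheaf level the composite $B_e\Omega^i_Y(\log E)(-p^eE)\to F^e_*\Omega^i_Y(\log E)(-E)$ is injective, not zero. The correct identity, which follows from the defining diagrams, is that the paper's operator $R^{d-1}\pi_*(C^{-1}_e)$ satisfies $R^{d-1}\pi_*(C^{-1}_e)\circ s_e = q\circ t_e$, where $q$ is the projection onto $R^{d-1}\pi_*G_e\Omega^i_Y(\log E)(-E)$. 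With that replacement your argument closes: $t_e=0$ for $e\gg0$ by your Serre-vanishing sandwich, $s_e$ is surjective by $R^d\pi_*=0$, hence the paper's operator vanishes. So the vanishing core of your proposal is correct and coincides with the paper's proof; the genuine gap is the iterated-Cartier formalism, which must be the nested $Z_e/B_e$ construction rather than naive iterated de Rham cohomology, and an operator valued in $R^{d-1}\pi_*G_e$ rather than in $F^e_{X*}M$.
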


We refer to Theorem \ref{thm:steenbring vanishing up to nilpotent} for the precise description of the action of the inverse iterated Cartier operator.
Note that while the degree of the higher direct image is $d-1$, there is no restriction on the degree $i$ of $\Omega^i_Y(\log E)(-E)$.

As the next step, we consider conditions under which the action of the inverse iterated Cartier operator becomes injective.

\begin{theo}[Propositions \ref{prop:higherdim} and \ref{prop:threedim}]\label{introthm:injective action}
    Let $X$ be a normal variety over a perfect field of positive characteristic, and let $\pi\colon Y\to X$ be a log resolution with reduced $\pi$-exceptional divisor $E$.
    Suppose that $X$ has rational singularities.
    Then the action of the inverse iterated Cartier operator on
    $R^{j}\pi_{*}\Omega^{d-1}_Y(\log E)(-E)$ 
    is injective for all $j\geq 3$.
    Moreover, $X$ is quasi-$F$-injective in addition, then the action of the inverse iterated Cartier operator on
    $R^{2}\pi_{*}\Omega^{d-1}_Y(\log E)(-E)$
    is injective.
\end{theo}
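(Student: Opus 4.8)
The plan is to turn the \emph{inverse iterated Cartier action} on $R^{j}\pi_{*}\Omega^{d-1}_Y(\log E)(-E)$ into an honest (quasi-)Frobenius action on a cohomology module attached to $E$, and then to read off injectivity from the singularity hypotheses. The natural object to work with is the twisted logarithmic de Rham complex $\mathcal F^{\bullet}\coloneqq \Omega^{\bullet}_Y(\log E)(-E)$: this is a genuine complex because $d\bigl(\sO_Y(-E)\bigr)\subseteq \Omega^{1}_Y(\log E)(-E)$, it carries the logarithmic Cartier isomorphism $\mathcal H^{i}(F_{*}\mathcal F^{\bullet})\xrightarrow{\ \sim\ }\mathcal F^{i}$ underlying the inverse iterated Cartier operator of Theorem \ref{Intromain:up to C}, and its top two terms are $\mathcal F^{d}=\omega_Y$ and $\mathcal F^{d-1}\cong T_Y(-\log E)\otimes\omega_Y$, where $T_Y(-\log E)=\Omega^{1}_Y(\log E)^{\vee}$.

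First I would invoke relative Grothendieck--Serre duality. Under the perfect pairing $\Omega^{i}_Y(\log E)(-E)\otimes \Omega^{d-i}_Y(\log E)\to \Omega^{d}_Y(\log E)(-E)=\omega_Y$, the complex $\mathcal F^{\bullet}$ is Grothendieck dual (with respect to $\omega_Y[d]$) to the untwisted complex $\Omega^{\bullet}_Y(\log E)$; in particular $\mathcal F^{d-1}$ is dual to $\Omega^{1}_Y(\log E)$. Localizing and completing at a singular point and passing to Matlis duals, the inverse-Cartier action on $R^{j}\pi_{*}\mathcal F^{d-1}$ is thereby transposed to the Frobenius action on a cohomology group built from $\Omega^{1}_Y(\log E)$. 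Next I would descend from $\Omega^{1}_Y(\log E)$ to structure sheaves using the residue sequence relating $\Omega^{1}_Y(\log E)$, $\Omega^{1}_Y$ and the $\sO_{E_i}$, together with the ideal sequence $0\to\sO_Y(-E)\to\sO_Y\to\sO_E\to 0$, so that the transposed action becomes the Frobenius on groups of the form $R^{k}\pi_{*}\sO_E$, i.e.\ precisely the data measuring $F$-injectivity of $X$.

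The decisive input is that $X$ has rational singularities, which are Cohen--Macaulay and therefore satisfy both $R^{>0}\pi_{*}\sO_Y=0$ and, by duality, the Grauert--Riemenschneider vanishing $R^{>0}\pi_{*}\omega_Y=0$. The latter annihilates every contribution of the top term $\mathcal F^{d}=\omega_Y$, so that in the spectral sequence of $F_{*}\mathcal F^{\bullet}$ the summand $R^{j}\pi_{*}\mathcal F^{d-1}$ appears as a clean subquotient on which the Cartier action is literally the restriction of the ambient Frobenius; the former collapses the ideal sequence into isomorphisms $R^{j}\pi_{*}\sO_Y(-E)\cong R^{j-1}\pi_{*}\sO_E$ for $j\ge 2$. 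For $j\ge 3$ the dual group then lies in a range where these vanishings already force the relevant groups $R^{k}\pi_{*}\sO_E$ either to vanish or to carry an injective Frobenius, so no further hypothesis is needed; this is Proposition \ref{prop:higherdim}. For $j=2$ one lands exactly on the borderline module, where injectivity of the one-step Frobenius is not automatic: here I would run the same argument with the \emph{iterated} Cartier operator and identify its transpose with the quasi-Frobenius of Definition \ref{def:quasi-F-injective}, so that the hypothesis of quasi-$F$-injectivity supplies the required injectivity, yielding Proposition \ref{prop:threedim}.

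The main obstacle is the bookkeeping that makes the identifications of the first two paragraphs exact: one must check that the abstractly defined inverse iterated Cartier action corresponds, after duality and the two sequences, to the honest (quasi-)Frobenius on $R^{k}\pi_{*}\sO_E$, and in particular that the connecting homomorphisms introduced by the brutal truncation of $F_{*}\mathcal F^{\bullet}$ and by the hypercohomology spectral sequence do not perturb this correspondence. Each such check is underwritten by the rational-singularity vanishings, and the genuine difference between the two cases is conceptual rather than technical: $j=2$ is the unique degree in which the relevant group $R^{k}\pi_{*}\sO_E$ survives with a Frobenius that is not forced to be injective, so that no vanishing can take the place of the quasi-$F$-injectivity assumption.
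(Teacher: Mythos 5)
Your high-level strategy---transpose the inverse iterated Cartier action by duality into Frobenius-type data, let rational singularities handle $j\geq 3$, and let quasi-$F$-injectivity handle $j=2$---is the right one, and for $j=2$ it is close in spirit to the paper's proof of Proposition \ref{prop:threedim}. But two of your identifications are wrong or missing, and they are exactly the load-bearing steps. First, $F$-injectivity and quasi-$F$-injectivity are conditions on Frobenius acting on the local cohomology $H^i_{\m}(\sO_X)$ (resp.\ on the maps $H^i_{\m}(\sO_X)\to H^i_{\m}(Q_{X,n})$ of Definition \ref{def:quasi-F-injective}); they are \emph{not} measured by $R^k\pi_{*}\sO_E$, so your descent through the residue sequence and the ideal sequence of $E$ lands on the wrong modules. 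The correct bridge is the sequence $0\to\sO_Y\to F_{*}\sO_Y\to B\Omega^1_Y\to 0$ and its Witt-vector iterations: one needs $R\pi_{*}B_n\Omega^1_Y\cong\mathcal{B}_{n,X}$ (Lemma \ref{lem:vanishing of higher B}, which itself uses rational singularities and the exact sequences \eqref{eq:WWO} and \eqref{eq:WOWOB}), together with the fact that the restriction $\mathcal{B}_{n,X}\to\mathcal{B}_X$ agrees with the Cartier operator $B_n\Omega^1_Y\to B\Omega^1_Y$ (Remark \ref{rem:B and BOmega}). Only with these in hand does quasi-$F$-injectivity---via the Cohen--Macaulay vanishing $H^{d-1}_{\m}(F_{*}\sO_X)=0$, local duality, Grothendieck duality, and Lemma \ref{lem:preliminaries-duality}(3)---become the statement that
\[
R^{1}\pi_{*}\left(\frac{F_{*}\Omega^{d-1}_Y(\log E)(-E)}{Z\Omega^{d-1}_Y(\log E)(-E)}\right)\to R^{1}\pi_{*}\left(\frac{F^l_{*}\Omega^{d-1}_Y(\log E)(-E)}{Z_l\Omega^{d-1}_Y(\log E)(-E)}\right)
\]
is zero, which is what feeds the diagram chase with \eqref{GGOmega/Z,iterated,specail}. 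Your proposal asserts this transposition (``identify its transpose with the quasi-Frobenius'') but supplies no mechanism, and the route through $\sO_E$ cannot produce it.

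Second, for $j\geq 3$ your hedge that the relevant groups ``either vanish or carry an injective Frobenius'' is not a proof, and the second alternative is not available: rational singularities do not force Frobenius to act injectively on anything in the relevant range. What actually happens is simpler and needs no spectral sequence or duality at all. By \eqref{GGB,specail}, the cokernel of each step $C^{-1}_{n+1,n}\colon G_n\Omega^{d-1}_Y(\log E)(-E)\to G_{n+1}\Omega^{d-1}_Y(\log E)(-E)$ is $F^n_{*}B\Omega^{d}_Y(\log E)(-E)\cong F^n_{*}B\Omega^d_Y$ (Lemma \ref{lem:FOmega^{d-1}/ZOmega}(3)), and $R^{j-1}\pi_{*}B\Omega^d_Y=0$ follows outright from Grauert--Riemenschneider vanishing $R^{j-2}\pi_{*}\omega_Y=R^{j-1}\pi_{*}\omega_Y=0$ applied to $0\to B\Omega^d_Y\to F_{*}\omega_Y\to\omega_Y\to 0$; injectivity of $C^{-1}_n$ then follows by composing the steps as in \eqref{eq:decomposition,specail}. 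This one-step factorization through the locally free sheaves $G_n$ is also what your $j=2$ argument silently relies on: without it, injectivity of the individual maps $C^{-1}_{n+1,n}$ cannot be assembled into injectivity of the iterated operator $C^{-1}_n$.
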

Note that while in Theorem \ref{Intromain:up to C} the degree of the higher direct image is fixed at $d-1$, here in Theorem \ref{introthm:injective action} the degree $j$ of the higher direct image can be any integer bigger than one.
Combining Theorem \ref{Intromain:up to C} and Theorem \ref{introthm:injective action}, we obtain Theorem \ref{Introthm:F-inj}.

\section{Preliminaries}

\subsection{Notation and terminology}
Throughout the paper, we work over a fixed perfect field $k$ of characteristic $p > 0$ unless stated otherwise.
A \textit{variety} is an integral separated scheme of finite type over $k$. 
Given a Noetherian normal integral scheme $X$, a projective birational morphism $\pi \colon Y \to X$ is called \emph{a resolution of $X$} if $Y$ is regular.
If the reduced $\pi$-exceptional divisor is a simple normal crossing divisor (snc for short) in addition, then
$\pi \colon Y \to X$ is called \emph{a log resolution of $X$}.

\subsection{Quasi-\texorpdfstring{$F$}{F}-injective}

Let $X$ be a normal $F$-finite Noetherian scheme.
For a sheaf of Witt vectors, we have the following maps: 
\begin{alignat*}{4}
&(\textrm{Frobenius}) &&F \colon W_n\sO_X \to W_n\sO_X, \quad &&F(a_0, a_1, \ldots, a_{n-1}) &&\coloneqq (a_0^p,a_1^p, \ldots, a_{n-1}^p),\\ 
&(\textrm{Verschiebung}) \quad  && V \colon W_n\sO_X \to W_{n+1}\sO_X, \quad  &&V(a_0, a_1, \ldots, a_{n-1}) &&\coloneqq (0, a_0, a_1,\ldots,  a_{n-1}),\\
&(\textrm{Restriction}) && R \colon W_{n+1}\sO_X \to W_{n}\sO_X, \quad  &&R(a_0, a_1, \ldots, a_{n}) &&\coloneqq (a_0, a_1, \ldots,  a_{n-1}),
\end{alignat*}
where $F$ and $R$ are ring homomorphisms and $V$ is an additive homomorphism.
We denote the composition $W_n\sO_X \to \sO_X$ of restrictions by $R_{n-1}$.
We then have the following short exact sequence:
\begin{equation}\label{eq:WWO}
    0 \to F_*W_{n-1}\sO_X \xrightarrow{V} W_n \sO_X \xrightarrow{R_{n-1}} \sO_X \to 0
\end{equation}
of $W_n\sO_X$-modules.
We define a $W_n\sO_X$-module $\mathcal{B}_{n,X}$ by \[
\mathcal{B}_{n,X}\coloneqq\mathrm{coker}(F\colon W_n\sO_X\to F_{*}W_n\sO_X).\]
Then $\mathcal{B}_{n,X}$ has a natural $\sO_X$-module structure via $R_{n-1}\colon W_n\sO_X\to \sO_X$ \cite[Proposition 3.6 (3)]{KTTWYY1}.
We set $\mathcal{B}_{X}\coloneqq \mathcal{B}_{1,X}$.
Then the exact sequence \eqref{eq:WWO} induces the short exact sequence
\begin{equation}\label{eq:BBB}
    0\to F_{*}\mathcal{B}_{n-1,X} \to \mathcal{B}_{n,X} \xrightarrow{R_{n-1}} \mathcal{B}_{X} \to 0
\end{equation}
of $\sO_X$-modules.
Consider the following pushout diagram :
\begin{center}
\begin{tikzcd}
0 \arrow[r] & W_n\sO_X \arrow{r}{F} \arrow[d,"R_{n-1}"'] & F_* W_n\sO_X \arrow{d} \arrow[r] & \mathcal{B}_{n,X}\arrow[d,equal]\arrow[r] & 0 \\
0 \arrow[r] & \sO_X \arrow{r}{\Phi_{X, n}} & Q_{X,n} \arrow[lu, phantom, "\usebox\pushoutdr" , very near start, yshift=0em, xshift=0.6em, color=black] \arrow[r]& \mathcal{B}_{n,X}\arrow[r] & 0
\end{tikzcd}
\end{center}
of $W_n\sO_X$-modules.
Then $Q_{X,n}$ has a natural $\sO_X$-module structure, and the lower short exact sequence can be seen as a short exact sequence of $\sO_X$-module \cite[Proposition 2.9 (2)]{KTTWYY1}.
Moreover, we obtain the following commutative diagram:
\begin{equation}\label{diagram:Q Vs O}
    \begin{tikzcd}
0 \arrow[r] & \sO_X \arrow{r}{\Phi_{X, n}}\arrow[d,equal] & Q_{X,n} \arrow{r}\arrow[d] & \mathcal{B}_{n,X}\arrow[r]\arrow[d,"R_{n-1}"] & 0\\
0 \arrow[r] & \sO_X \arrow{r}{F} & F_{*}\sO_X \arrow{r} & \mathcal{B}_{X}\arrow[r] & 0.
\end{tikzcd}
\end{equation}

\begin{defn}\label{def:quasi-F-injective}
    Let $X$ be a normal $F$-finite Noetherian scheme.
    We say that $X$ is \textit{quasi-$F$-injective at a closed point $x\in X$} if there exists a positive integer $n\in\Z_{>0}$ such that
    the map
    \[
    H^i_{\m_x}(\sO_{X})\to H^i_{\m_x}(Q_{X,n})
    \]
    is injective for all $i\geq 0$.
    We say that $X$ is \textit{quasi-$F$-injective} if it is quasi-$F$-injective at every closed point.
\end{defn}
\begin{rem}\label{rem:quasi-F-inj}
    Let $X$ be a normal $F$-finite Noetherian scheme.
    Recall that a quasi-$F$-purity at a closed point $x\in X$ is equivalent to the splitting of the map
    \[
    \sO_X \to Q_{X,n}
    \]
    for some $n>0$ at $x\in X$ (see \cite[Proposition 2.8]{KTTWYY1}).
    Thus, if $X$ is quasi-$F$-pure, then it is quasi-$F$-injective.
\end{rem}

\section{The inverse iterated Cartier operators}\label{subseq:Cartier}
Let $Y$ be a smooth variety, and let $E$ be a reduced divisor on $Y$ with snc support.
Let $A$ be a $\Q$-divisor whose support of the fractional part $A-\lfloor A\rfloor$ is contained in $E$.
We define $\sO_Y$-modules $B\Omega_Y^i(\log E)(pA)$ and $Z\Omega_Y^i(\log E)(pA)$ to be the boundaries and the cycles of $F_*\Omega_Y^i(\log E)(pA)$ at $i$: 
\begin{align} 
\label{eq:definition-of-B1} B \Omega_Y^i(\log E)(pA) &\coloneqq {\rm im}\left(F_*\Omega_Y^{i-1}(\log E)(pA) \xrightarrow{F_*d}  F_*\Omega_Y^{i}(\log E)(pA)\right),\\ 
Z\Omega_Y^i(\log E)(pA) &\coloneqq \Ker\left(F_*\Omega_Y^{i}(\log E)(pA) \xrightarrow{F_*d}  F_*\Omega_Y^{i+1}(\log E)(pA)\right) \nonumber,
\end{align}
where $F_*\Omega_Y^{i}(\log E)(pA)$ denotes $F_*\left(\Omega_Y^{i}(\log E)\otimes \sO_Y(\lfloor pA \rfloor)\right)$.
Note that, in general
\begin{align*}
    &B \Omega_Y^i(\log E)(pA)\neq B \Omega_Y^i(\log E)\otimes\sO_Y(pA)\quad\text{and}\\
    &Z\Omega_Y^i(\log E)(pA)\neq Z\Omega_Y^i(\log E)\otimes\sO_Y(pA).
\end{align*}
By definition, we obtain the short exact sequence
\begin{equation}\label{eq:ZOmegaB}
    0\to Z\Omega_Y^i(\log E)(pA) \to F_{*}\Omega_Y^i(\log E)(pA) \to B\Omega_Y^{i+1}(\log E)(pA) \to 0.
\end{equation}
Taking $i=0$ and $A=0$ in \eqref{eq:ZOmegaB}, we obtain the short exact sequence
\begin{equation}\label{eq:OOB}
0\to \sO_Y \to F_{*}\sO_Y \to B\Omega^{1}_Y \to 0.
\end{equation}
The Cartier isomorphism (\cite[Lemma 3.3]{Hara98}, \cite[equation (5.4.2)]{KTTWYY1}) gives the short exact sequence
\begin{equation} \label{eq:BZOmega}
0 \to B\Omega^i_X(\log E)(pA) \to Z\Omega^i_X(\log E)(pA) \xrightarrow{C} \Omega^i_X(\log E)(A) \to 0.
\end{equation}
Taking $i=d$ and $A=E=0$ in \eqref{eq:BZOmega}, we obtain the short exact sequence
\begin{equation}\label{eq:Bomegaomega}
0\to B\Omega^d_Y \to F_{*}\omega_Y \xrightarrow{C} \omega_Y \to 0,
\end{equation}
and the last map $C\colon F_{*}\omega_Y\to \omega_Y$ is nothing but the Frobenius trace map.

We set
\[
G\Omega^i_X(\log E)(pA) \coloneqq \frac{F_*\Omega^{i}_X(\log E)(pA)}{B\Omega^{i}_X(\log E)(pA)}.
\]
Then we have the inverse Cartier operator
\begin{align}
\label{eq:dual-Cartier-Hara} C^{-1}\colon \Omega^i_X(\log E)(A)&\underset{\cong}{\xleftarrow{C}} \frac{Z\Omega^{i}_X(\log E)(pA)}{B\Omega^{i}_X(\log E)(pA)} \\[0.2em]
&\xhookrightarrow{\hphantom{C}} \frac{F_*\Omega^{i}_X(\log E)(pA)}{B\Omega^{i}_X(\log E)(pA)}=G\Omega^i_X(\log E)(pA). \nonumber
 \end{align}

\subsubsection{Iterations}
Inductively on $n$, we will construct an $\sO_Y$-submodule \[
Z_n\Omega_Y^i(\log E)(p^nA) \subset F^n_* \Omega_Y(\log E)(p^nA),
\] 
and an $\sO_Y$-module homomorphism \[C\colon Z_{n}\Omega_Y^i(\log E)(p^nA) \onto Z_{n-1}\Omega_Y^i(\log E)(p^{n-1}A)\] as follows:
We first set 
\begin{align*}
    &Z_0\Omega_Y^i(\log E)(A)\coloneqq \Omega_Y^i(\log E)(A)\quad\text{and}\\
    &Z_1 \Omega_Y^i(\log E)(pA)\coloneqq Z\Omega_Y^i(\log E)(pA).
\end{align*}
The morphism $C\colon Z_1\Omega_Y^i(\log E)(pA) \onto Z_0\Omega_Y^i(\log E)(A)$ is induced by the Cartier isomorphism \eqref{eq:BZOmega}.
After the map \[C\colon Z_{n}\Omega_Y^i(\log E)(p^nA) \onto Z_{n-1}\Omega_Y^i(\log E)(p^{n-1}A)\] is constructed, we define an $\sO_Y$-submodule \[Z_{n+1}\Omega_Y^i(\log E)(p^{n+1}A)\subset F_*Z_{n}\Omega_Y^i(\log E)(p^{n}A)\] and an $\sO_Y$-moudle homomorphism \[C\colon Z_{n+1}\Omega_Y^i(\log E)(p^{n+1}A) \onto Z_n\Omega_Y^i(\log E)(p^nA)\] by the following pullback diagram:
\[
\begin{tikzcd}
Z_{n+1}\Omega_Y^i(\log E)(p^{n+1}A) \arrow[rd, phantom, "\usebox\pullback" , very near start, yshift=-0.3em, xshift=-0.6em, color=black] \arrow[d,"C"', twoheadrightarrow] \arrow[r, hook] & F_*Z_{n}\Omega_Y^i(\log E)(p^{n}A) \arrow[d, "F_*C"', twoheadrightarrow] \\
 Z_{n}\Omega_Y^i(\log E)(p^{n}A) \arrow[r, hook] & F_*Z_{n-1}\Omega_Y^i(\log E)(p^{n-1}A).
\end{tikzcd}    
\]
We denote by $C_{n}\colon Z_n\Omega_Y^i(\log E)(p^nA) \onto \Omega_Y^i(\log E)(A)$ the composite map
\[
Z_n\Omega_Y^i(\log E)(p^nA) \xrightarrow{C} Z_{n-1}\Omega_Y^i(\log E)(p^{n-1}A) \xrightarrow{C}\cdots \xrightarrow{C} \Omega_Y^i(\log E)(A).
\]
Here, we set \[C_0\coloneqq \mathrm{id}\colon Z_0\Omega_Y^i(\log E)(A)\to \Omega_Y^i(\log E)(A).\]
By induction on $n$ starting from \eqref{eq:ZOmegaB}, we have the following exact sequences
\begin{tiny}
\begin{equation}\label{diagram for Z_n}
\begin{tikzcd}
0 \arrow{r} & Z_{n+1}\Omega_Y^i(\log E)(p^nA) \arrow[rd, phantom, "\usebox\pullback" , very near start, yshift=-0.3em, xshift=-0.6em, color=black] \arrow[d,"C"', twoheadrightarrow] \arrow[r] & F_*Z_{n}\Omega_Y^i(\log E)(p^nA) \arrow[d, "F_{*}C"', twoheadrightarrow] \arrow[rr,"F_{*}d\circ F_{*}C_n"] && B\Omega^{i+1}_Y(\log E)(A) \arrow[d,equal] \arrow[r] & 0\\
0 \arrow{r} & Z_{n}\Omega_Y^i(\log E)(p^nA) \arrow{r} & F_*Z_{n-1}\Omega_Y^i(\log E)(p^{n-1}A) \arrow[rr,"F_*d\circ F_{*}C_{n-1}"] && B\Omega^{i+1}_Y(\log E)(A) \arrow{r} & 0,
\end{tikzcd}    
\end{equation}
\end{tiny}
\noindent where horizontal sequences are exact. 
Applying the snake lemma, we obtain the short exact sequence
\begin{equation}\label{eq:Z_ntoZ_{n-1}}
    0\to F^{n}_{*}B\Omega^i_Y(\log E)(p^{n+1}A) \to Z_{n+1}\Omega^i_Y(\log E)(p^{n+1}A) \xrightarrow{C} Z_{n}\Omega^i_Y(\log E)(p^{n}A) \to 0.
\end{equation}

We next define an $\sO_Y$-submodule \[
B_n \Omega_Y^i(\log E)(p^nA)\subset Z_n\Omega_Y^i(\log E)(p^nA)
\]
an $\sO_Y$-module homomorphism $C\colon B_n \Omega_Y^i(\log E)(p^nA) \onto B_{n-1}\Omega_Y^i(\log E)(p^{n-1}A)$.
We first set 
\begin{align*}
    &B_0\Omega_Y^i(\log E)(A)\coloneqq 0\quad\text{and}\\
    &B_1 \Omega_Y^i(\log E)(pA)\coloneqq B\Omega_Y^i(\log E)(pA).
\end{align*}
For $n\geq 2$, we define an $\sO_Y$-module $B_n \Omega_Y^i(\log E)(p^nA)$ so that it fits in the following pull-back diagram:
\begin{small}
\begin{equation}\label{diagram:construction for B_n}
\begin{tikzcd}
0 \arrow{r} & B_{n+1}\Omega_Y^i(\log E)(p^nA) \arrow[rd, phantom, "\usebox\pullback" , very near start, yshift=-0.3em, xshift=-0.6em, color=black] \arrow[d,"C"', twoheadrightarrow] \arrow{r} & Z_{n+1}\Omega_Y^i(\log E)(p^nA) \arrow[d,"C"', twoheadrightarrow] \arrow[r,"C_{n+1}"] & \Omega_Y^i(\log E)(A) \arrow[d,equal] \arrow[r] & 0\\
0 \arrow{r} & B_{n}\Omega_Y^i(\log E)(p^nA) \arrow{r} & Z_{n}\Omega_Y^i(\log E)(p^nA) \arrow[r,"C_{n}"] & \Omega_Y^i(\log E)(A) \arrow{r} & 0,
\end{tikzcd}    
\end{equation}
\end{small}
where horizontal sequences are exact. 
In particular, we obtain a short exact sequence
\begin{equation}\label{BZOmega(iterated)}
    0 \to B_{n}\Omega_Y^i(\log E)(p^nA) \to Z_{n}\Omega_Y^i(\log E)(p^nA) \xrightarrow{C_{n}}  \Omega_Y^i(\log E)(A) \to 0
    \end{equation}
for all $n\geq 0$.
When $n=1$, this is nothing but \eqref{eq:BZOmega}. 
By the snake lemma and \eqref{eq:Z_ntoZ_{n-1}}, we also obtain the short exact sequences
\begin{align}
    & 0\to F^{n}_{*}B\Omega_Y^i(\log E)(p^{n+1}A) \to B_{n+1}\Omega_Y^i(\log E)(p^{n+1}A) \xrightarrow{C} B_{n}\Omega_Y^i(\log E)(p^nA) \to 0,\label{eq:B_ntoB_{n-1}}\\
    & 0\to F_{*}B_{n}\Omega_Y^i(\log E)(p^{n+1}A) \to B_{n+1}\Omega_Y^i(\log E)(p^{n+1}A) \xrightarrow{C_{n}} B\Omega_Y^i(\log E)(pA) \to 0.\label{eq:B_ntoB_{n-1}2}
\end{align}
Recall that $Z_n\Omega^i_Y(\log E)(p^nA)$ and $B_n\Omega^i_Y(\log E)(p^nA)$ are locally free \cite[Lemma 5.10]{KTTWYY1}.

For $n\geq 0$, we set
\begin{equation}\label{def of GOmega}
    G_n\Omega^i_Y(\log E)(p^nA) \coloneqq \frac{F^n_*\Omega^{i}_Y(\log E)(p^nA)}{B_n\Omega^{i}_Y(\log E)(p^nA)}.
\end{equation}
Note that $G_0\Omega^i_Y(\log E)(A)=\Omega^{i}_Y(\log E)(A)$.
We have the inverse iterated Cartier operator
\begin{align}
\label{eq:dual-Cartier-Hara-higher} 
C^{-1}_{n}\colon \Omega^i_Y(\log E)(A)&\underset{\cong}{\xleftarrow{C_{n}}} \frac{Z_n\Omega^{i}_Y(\log E)(p^nA)}{B_n\Omega^{i}_Y(\log E)(p^nA)} \\[0.2em]
&\hookrightarrow \frac{F^n_*\Omega^{i}_Y(\log E)(p^nA)}{B_n\Omega^{i}_Y(\log E)(p^nA)}=G_n\Omega^i_Y(\log E)(p^nA). \nonumber
\end{align}

We can construct the inverse Cartier operator between $G_n\Omega^i_Y(\log E)(p^nA)$ and $G_{n+1}\Omega^i_Y(\log E)(p^{n+1}A)$ as follows:
\begin{align}\label{eq:GnGn+1}
  C^{-1}_{n+1,n}\colon   G_n\Omega^i_Y(\log E)(p^nA)= \frac{F^n_{*}\Omega^i_Y(\log E)(p^nA)}{B_n\Omega^i_Y(\log E)(p^nA)}&\xrightarrow{C^{-1}} \frac{F^n_{*}\left(\frac{F_{*}\Omega^i_Y(\log E)(p^{n+1}A)}{B\Omega^i_Y(\log E)(p^{n+1}A)}\right)}{\frac{B_{n+1}\Omega^i_Y(\log E)(p^{n+1}A)}{F^n_{*}B\Omega^i_Y(\log E)(p^{n+1}A)}}\\
  &\cong \frac{F^{n+1}_{*}\Omega^i_Y(\log E)(p^{n+1}A)}{B_{n+1}\Omega^i_Y(\log E)(p^{n+1}A)}\\
  &=G_{n+1}\Omega^i_Y(\log E)(p^{n+1}A),
\end{align}
where we use \eqref{eq:dual-Cartier-Hara} and \eqref{eq:B_ntoB_{n-1}} for the first map.
Then we have a decomposition 
\begin{equation}\label{eq:decomposition}
    C^{-1}_{n}\colon \Omega^i_Y(\log E)(A)\xrightarrow{C^{-1}=C_{1,0}^{-1}} G_1\Omega^i_Y(\log E)(pA)\xrightarrow{C_{2,1}^{-1}}\cdots \xrightarrow{C_{n,n-1}^{-1}} G_n\Omega^i_Y(\log E)(p^nA)
\end{equation}
of \eqref{eq:dual-Cartier-Hara-higher}.
Moreover, since we have
\begin{align*}
   G_n\Omega^i_Y(\log E)(p^nA)= \frac{F^n_{*}\Omega^i_Y(\log E)(p^nA)}{B_n\Omega^i_Y(\log E)(p^nA)}&\underset{\cong}{\xrightarrow{C^{-1}}} \frac{F^n_{*}\left(\frac{Z\Omega^i_Y(\log E)(p^{n+1}A)}{B\Omega^i_Y(\log E)(p^{n+1}A)}\right)}{\frac{B_{n+1}\Omega^i_Y(\log E)(p^{n+1}A)}{F^n_{*}B\Omega^i_Y(\log E)(p^{n+1}A)}}\\
   &\cong \frac{F^{n}_{*}Z\Omega^i_Y(\log E)(p^{n+1}A)}{B_{n+1}\Omega^i_Y(\log E)(p^{n+1}A)}, 
\end{align*}
we obtain a short exact sequence
\begin{small}
\begin{equation}\label{GGOmega/Z}
    0\to G_n\Omega^i_Y(\log E)(p^nA)\xrightarrow{C^{-1}_{n+1,n}}G_{n+1}\Omega^i_Y(\log E)(p^{n+1}A)\to F^n_{*}\left(\frac{F_{*}\Omega^i_Y(\log E)(p^{n+1}A)}{Z\Omega^i_Y(\log E)(p^{n+1}A)}\right)\to 0.
\end{equation}
\end{small}
By \eqref{eq:ZOmegaB}, the above short exact sequence can be written as follows:
\begin{small}
\begin{equation}\label{GGB}
    0\to G_n\Omega^i_Y(\log E)(p^nA)\xrightarrow{C^{-1}_{n+1,n}}G_{n+1}\Omega^i_Y(\log E)(p^{n+1}A)\to F^n_{*}B\Omega^{i+1}_Y(\log E)(p^{n+1}A)\to 0.
\end{equation}
\end{small}
In particular, since $B\Omega^{i+1}_Y(\log E)(p^{n+1}A)$ and $G_0\Omega^i_Y(\log E)(A)=\Omega^i_Y(\log E)(A)$ are locally free, we can observe that $G_n\Omega^i_Y(\log E)(p^nA)$ is locally free for all $n\geq 0$ inductively by \eqref{GGB}.
As a more general version of \eqref{GGOmega/Z}, we have a short exact sequence
\begin{small}
\begin{equation}\label{GGOmega/Z,iterated}
    0\to G_n\Omega^i_Y(\log E)(p^nA)\xrightarrow{C^{-1}_{n+l,n}}G_{n+l}\Omega^i_Y(\log E)(p^{n+l}A)\to F^n_{*}\left(\frac{F^l_{*}\Omega^i_Y(\log E)(p^{n+l}A)}{Z_l\Omega^i_Y(\log E)(p^{n+l}A)}\right)\to 0.
\end{equation}
\end{small}

For a $\Q$-divisor $A'$ such that $A\leq A'$ and $\Supp(A'-\lfloor A'\rfloor)\subset E$, we have a commutative diagram
\begin{equation}\label{digram:Omega and G}
    \begin{tikzcd}
\Omega_Y^i(\log E)(A) \arrow[d] \arrow{r} & G_{n}\Omega_Y^i(\log E)(p^nA) \arrow[d] \\
\Omega_Y^i(\log E)(A') \arrow{r} & G_{n}\Omega_Y^i(\log E)(p^nA'),
\end{tikzcd}  
\end{equation}
see \cite[Remark 2.7]{Kawakami-Witaszek}.

Finally, by \cite[Lemma 6.7]{KTTWYY2}, we have the commutative diagram
\begin{equation}\label{eq:WOWOB}
    \begin{tikzcd}
0 \arrow{r} & W_n\sO_Y \arrow[r,"F"]\arrow[d,"R_{n-1}"] & F_{*}W_n\sO_Y \arrow[r]\arrow[d,"F_*R_{n-1}"] & B_n\Omega^{1}_Y \arrow{r}\arrow[d,"C_{n-1}"] & 0\\
0 \arrow{r} &\sO_Y \arrow[r,"F"] & F_{*}\sO_Y \arrow[r,"d"] & B\Omega^{1}_Y  \arrow[r] & 0,
\end{tikzcd} 
\end{equation}
where horizontal sequences are exact. 

\begin{rem}\label{rem:B and BOmega}
Recall that we defined an $\sO_Y$-module $\mathcal{B}_{n,Y}$ by
\[
\mathrm{coker}(F\colon W_n\sO_Y\to F_{*}W_n\sO_Y).
\]
By \eqref{eq:WOWOB}, we can observe that the restriction map
$\mathcal{B}_{n,Y}\xrightarrow{R_{n-1}} \mathcal{B}_Y$
coincides with the Cartier operator
$B_n\Omega^{1}_Y\xrightarrow{C_{n-1}} B\Omega^{1}_Y$.
\end{rem}

\subsection{Special cases}

Taking $A=-\frac{1}{p^n}E$ in \eqref{BZOmega(iterated)} and \eqref{eq:dual-Cartier-Hara-higher}, we have a short exact sequence
\begin{equation}\label{BZOmega(iterated),-E}
   0 \to B_{n}\Omega_Y^i(\log E)(-E) \to Z_{n}\Omega_Y^i(\log E)(-E) \xrightarrow{C_{n}}  \Omega_Y^i(\log E)(-E) \to 0 
\end{equation}
and the iterated inverse Cartier operator
\begin{align}
\label{eq:dual-Cartier-Hara-higher,-E} 
C^{-1}_{n}\colon \Omega^i_Y(\log E)(-E)&\underset{\cong}{\xleftarrow{C_{n}}} \frac{Z_n\Omega^{i}_Y(\log E)(-E)}{B_n\Omega^{i}_Y(\log E)(-E)} \\[0.2em]
&\xhookrightarrow{\hphantom{C_{n,\Delta}}} \frac{F^n_*\Omega^{i}_Y(\log E)(-E)}{B_n\Omega^{i}_Y(\log E)(-E)}=G_n\Omega^i_Y(\log E)(-E). \nonumber
\end{align}
Taking $A=-\frac{1}{p^{n+1}}E$ in \eqref{eq:GnGn+1}, we have the inverse Cartier operator
\begin{align}\label{eq:GnGn+1,specail}
  C^{-1}_{n+1,n}\colon   G_n\Omega^i_Y(\log E)(-E)\to G_{n+1}\Omega^i_Y(\log E)(-E),
\end{align}
and a decomposition
\begin{equation}\label{eq:decomposition,specail}
    C^{-1}_{n}\colon \Omega^i_Y(\log E)(-E)\xrightarrow{C^{-1}=C_{1,0}^{-1}} G_1\Omega^i_Y(\log E)(-E)\xrightarrow{C_{2,1}^{-1}}\cdots \xrightarrow{C_{n,n-1}^{-1}} G_n\Omega^i_Y(\log E)(-E).
\end{equation}
Moreover, taking $A=-\frac{1}{p^{n+1}}E$ in \eqref{GGB}, we have a short exact sequence
\begin{equation}\label{GGB,specail}
    0\to G_n\Omega^i_Y(\log E)(-E)\xrightarrow{C^{-1}_{n+1,n}}G_{n+1}\Omega^i_Y(\log E)(-E)\to F^n_{*}B\Omega^{i+1}_Y(\log E)(-E)\to 0,
\end{equation}
and taking $A=-\frac{1}{p^{n+l}}E$ in \eqref{GGOmega/Z,iterated} we have a short exact sequence
\begin{small}
\begin{equation}\label{GGOmega/Z,iterated,specail}
    0\to G_n\Omega^i_Y(\log E)(-E)\xrightarrow{C^{-1}_{n+1,n}}G_{n+l}\Omega^i_Y(\log E)(-E)\to F^n_{*}\left(\frac{F^l_{*}\Omega^i_Y(\log E)(-E)}{Z_l\Omega^i_Y(\log E)(-E)}\right)\to 0.
\end{equation}
\end{small}

\begin{lem} \label{lem:preliminaries-duality} With notation as above, 
we have isomorphisms
\begin{enumerate}
\item[\textup{(1)}] $\sHom_{\sO_Y}(F_*\Omega^i_Y(\log E), \omega_Y) \cong F_*\Omega^{n-i}_Y(\log E)(-E)$,\\[-0.8em]
\item[\textup{(2)}] 
    $\sHom_{\sO_Y}(Z_n\Omega^i_Y(\log E), \omega_Y) \cong \frac{F^n_{*}\Omega^{i}_Y(\log E)(-E)}{B_n\Omega^{i}_Y(\log E)(-E)}= G_n\Omega^{n-i}_Y(\log E)(-E)$, and\\[-0.8em] 
    \item[\textup{(3)}] $\sHom_{\sO_Y}(B_n\Omega^{i}_Y(\log E), \omega_Y) \cong \frac{F^n_{*}\Omega^{i}_Y(\log E)(-E)}{Z_n\Omega^{i}_Y(\log E)(-E)}$, 
\end{enumerate}
\end{lem}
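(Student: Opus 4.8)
The plan is to derive all three isomorphisms from Grothendieck--Serre duality, using that every sheaf in sight---$F^n_*\Omega^i_Y(\log E)$, $Z_n\Omega^i_Y(\log E)$, $B_n\Omega^i_Y(\log E)$, and $G_n\Omega^i_Y(\log E)$---is locally free (the first because $F$ is finite flat, the middle two by \cite[Lemma 5.10]{KTTWYY1}, the last as observed after \eqref{GGB}). Hence $D(-)\coloneqq\sHom_{\sO_Y}(-,\omega_Y)$ is an exact, involutive, contravariant functor on the short exact sequences of Section~\ref{subseq:Cartier}. Two inputs do the work. First, Grothendieck duality for the iterated Frobenius: since $(F^n)^!\omega_Y\cong\omega_Y$, we have $\sHom_{\sO_Y}(F^n_*\mathcal{M},\omega_Y)\cong F^n_*\sHom_{\sO_Y}(\mathcal{M},\omega_Y)$ for locally free $\mathcal{M}$. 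Second, the wedge pairing $\Omega^i_Y(\log E)\otimes\Omega^{\dim Y-i}_Y(\log E)\to\Omega^{\dim Y}_Y(\log E)=\omega_Y(E)$ is perfect, so $\sHom_{\sO_Y}(\Omega^i_Y(\log E),\omega_Y)\cong\Omega^{\dim Y-i}_Y(\log E)(-E)$. Combining the two gives (1) immediately, together with the iterated form $D(F^n_*\Omega^i_Y(\log E))\cong F^n_*\Omega^{\dim Y-i}_Y(\log E)(-E)$ that the induction requires; in particular $D$ trades the degree $i$ for $\dim Y-i$ and adds the twist $(-E)$ (throughout, the complementary degree written $n-i$ in the statement means $\dim Y-i$).

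For (2) and (3) I would induct on $n$, taking (2) as the primary statement. The base case $n=1$ is cleanest via an annihilator computation. Under the pairing $F_*\Omega^i_Y(\log E)\otimes F_*\Omega^{\dim Y-i}_Y(\log E)(-E)\to F_*\omega_Y\xrightarrow{C}\omega_Y$ induced by the Frobenius trace \eqref{eq:Bomegaomega}, the Leibniz rule $d(\alpha\wedge\beta)=d\alpha\wedge\beta\pm\alpha\wedge d\beta$ together with the vanishing of $C$ on exact top forms (the kernel of $C$ in \eqref{eq:Bomegaomega} is $B\Omega^{\dim Y}_Y$) shows that $B\Omega^i_Y(\log E)$ and $Z\Omega^{\dim Y-i}_Y(\log E)(-E)$ are mutual exact annihilators. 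This yields (3) for $n=1$ and, by biduality, (2) for $n=1$; because it matches a subsheaf to a subsheaf rather than merely isomorphic sub and quotient, no extension ambiguity intervenes, and it records the structural fact that $D$ sends the Cartier surjection $C$ to the inverse Cartier injection $C^{-1}$.

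For the inductive step I would dualize \eqref{eq:Z_ntoZ_{n-1}} (with $A=0$), namely $0\to F^n_*B\Omega^i_Y(\log E)\to Z_{n+1}\Omega^i_Y(\log E)\to Z_n\Omega^i_Y(\log E)\to 0$. Applying (2) at level $n$ to the quotient, and Grothendieck duality together with the $n=1$ case to the sub, the result is a short exact sequence whose outer terms are exactly $G_n\Omega^{\dim Y-i}_Y(\log E)(-E)$ and $F^n_*B\Omega^{\dim Y-i+1}_Y(\log E)(-E)$, i.e. those of the defining sequence \eqref{GGB,specail} for $G_{n+1}\Omega^{\dim Y-i}_Y(\log E)(-E)$. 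Granting (2) at level $n+1$, I would then deduce (3) by dualizing the Cartier sequence \eqref{BZOmega(iterated)}: its dual reads $0\to\Omega^{\dim Y-i}_Y(\log E)(-E)\to G_n\Omega^{\dim Y-i}_Y(\log E)(-E)\to D(B_n\Omega^i_Y(\log E))\to 0$, and identifying the first map with the inverse iterated Cartier operator \eqref{eq:dual-Cartier-Hara-higher,-E} exhibits the cokernel as $F^n_*\Omega^{\dim Y-i}_Y(\log E)(-E)/Z_n\Omega^{\dim Y-i}_Y(\log E)(-E)$, which is (3).

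The main obstacle is upgrading ``equal outer terms'' to ``isomorphic middle terms'': two extensions with isomorphic sub and quotient need not be isomorphic. To settle this I would dualize not merely \eqref{eq:Z_ntoZ_{n-1}} but the whole Cartesian square defining $Z_{n+1}\Omega^i_Y(\log E)$ (the pullback square preceding \eqref{diagram for Z_n}). Since its vertical Cartier maps are surjective, the square is bicartesian---the associated sequence $0\to Z_{n+1}\Omega^i_Y(\log E)\to Z_n\Omega^i_Y(\log E)\oplus F_*Z_n\Omega^i_Y(\log E)\to F_*Z_{n-1}\Omega^i_Y(\log E)\to 0$ is exact---so the exact contravariant $D$ carries it to a cocartesian square. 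By the inductive hypothesis and Grothendieck duality this pushout presents $D(Z_{n+1}\Omega^i_Y(\log E))$ as the pushout of inverse Cartier maps that builds $G_{n+1}\Omega^{\dim Y-i}_Y(\log E)(-E)$ and $C^{-1}_{n+1,n}$ in \eqref{eq:GnGn+1}; its universal property furnishes a comparison map of the two short exact sequences, and the five lemma makes it an isomorphism. The single routine point I defer is that $D$ interchanges $C$ with $C^{-1}$ at every stage---the structural reason the pullback presentation of the $Z_n$ is dual to the pushout presentation of the $G_n$.
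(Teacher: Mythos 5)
Your proposal is correct, and its skeleton coincides with the paper's: both arguments exploit that $\sHom_{\sO_Y}(-,\omega_Y)$ is exact on the locally free sheaves $F^n_*\Omega^i_Y(\log E)$, $Z_n\Omega^i_Y(\log E)$, $B_n\Omega^i_Y(\log E)$, take classical ($n=1$) Cartier duality as the base case, prove (2) by induction on $n$, and then deduce (3) by dualizing \eqref{BZOmega(iterated)} and identifying the resulting injection with the inverse iterated Cartier operator. The differences are in execution, and they are instructive. First, the paper simply cites \cite[Lemma 2.8]{Kawakami-Witaszek} both for (1) and for $\sHom_{\sO_Y}(B\Omega^i_Y(\log E),\omega_Y)\cong B\Omega^{d-i+1}_Y(\log E)(-E)$, whereas you re-derive the base case from the trace pairing and the Leibniz rule; that is essentially the proof of the cited lemma, so nothing is lost. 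Second, and more substantively, in the inductive step the paper dualizes the kernel sequence $0\to Z_n\Omega^i_Y(\log E)\to F_*Z_{n-1}\Omega^i_Y(\log E)\to B\Omega^{i+1}_Y(\log E)\to 0$ (the rows of \eqref{diagram for Z_n}), so that the unknown dual of $Z_n$ appears as a \emph{cokernel} of a map between sheaves already identified by induction; the only compatibility needed is that the image of the dualized injection $B\Omega^{d-i}_Y(\log E)(-E)\hookrightarrow F^n_*\Omega^{d-i}_Y(\log E)(-E)/F_*B_{n-1}\Omega^{d-i}_Y(\log E)(-E)$ is exactly $B_n/F_*B_{n-1}$, which \eqref{eq:B_ntoB_{n-1}2} supplies. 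You instead dualize the snake-lemma sequence \eqref{eq:Z_ntoZ_{n-1}}, which places the unknown dual of $Z_{n+1}$ in the \emph{middle} of an extension, and this is what forces you to confront the extension ambiguity; your bicartesian-square/pushout resolution of it is correct (the pushout of the natural surjection $F_*G_{n-1}\twoheadrightarrow G_n$ and $F_*C^{-1}_{n,n-1}$ really is $G_{n+1}$, because the image of $B_n/F_*B_{n-1}$ under the inverse Cartier map is $B_{n+1}/F_*B_n$ by the pullback construction \eqref{diagram:construction for B_n}). Your one deferred point---that duality interchanges $C$ and $C^{-1}$---is true by the multiplicativity of the Cartier operator on closed forms, and it is in fact also used silently by the paper at the step where it identifies the dualized injection with $C^{-1}_n$. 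In short, the paper's choice of which sequence to dualize is what lets it avoid the pushout argument entirely, at the cost of leaving that compatibility implicit; your route spends an extra diagram to make the same compatibility visible. (You are also right that the superscripts $n-i$ in the statement are typos for $\dim Y-i$.)
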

\begin{proof}
    (1) follows from the usual duality (see \cite[Lemma 2.8]{Kawakami-Witaszek} for example). 
    Moreover, by \cite[Lemma 2.8]{Kawakami-Witaszek}, an isomorphism 
    \[
    \sHom_{\sO_Y}(B\Omega^i_Y(\log E), \omega_Y) \cong B\Omega^{d-i+1}(\log E)(-E).
    \]
    We first prove (2) by induction on $n$.
    Suppose that 
    \[
    \sHom_{\sO_Y}(Z_{n-1}\Omega^i_Y(\log E),\omega_Y)\cong \frac{F^{n-1}_{*}\Omega^{i}_Y(\log E)(-E)}{B_{n-1}\Omega^{i}_Y(\log E)(-E)}.
    \]
    Then, taking $\sHom_{\sO_X}(-,\omega_Y)$ of the short exact sequence (cf.~\eqref{eq:Z_ntoZ_{n-1}})
    \[
    0\to Z_n\Omega^i_Y(\log E)\to F_{*}Z_{n-1}\Omega^i_Y(\log E) \to B\Omega^{i+1}_Y(\log E)\to 0,
    \]
    we have a short exact sequence
    \begin{multline*}
        0\to B\Omega^{d-i}_Y(\log E)(-E) \xrightarrow{C^{-1}_n} F_{*}\left(\frac{F^{n-1}_{*}\Omega^{d-i}_Y(\log E)(-E)}{B_{n-1}\Omega^{d-1}_Y(\log E)(-E)}\right)\\\to \sHom_{\sO_Y}(Z_n\Omega^i_Y(\log E), \omega_Y) \to 0.
    \end{multline*}
    Since we have an isomorphism (see \eqref{eq:B_ntoB_{n-1}2})
    \[
    B\Omega^{d-i}_Y(\log E)(-E) \underset{\cong}{\xrightarrow{C^{-1}_n}} \frac{B_n\Omega^{d-i}_Y(\log E)(-E)}{F_{*}B_{n-1}\Omega^{d-1}_Y(\log E)(-E)},
    \]
    we have
    \[
    \sHom_{\sO_Y}(Z_n\Omega^i_Y(\log E), \omega_Y)\cong \frac{F^n_{*}\Omega^{i}_Y(\log E)(-E)}{B_n\Omega^{i}_Y(\log E)(-E)}=G_n\Omega^{n-i}_Y(\log E)(-E).
    \]

    Finally, we prove (3).
    Taking $\sHom_{\sO_X}(-,\omega_Y)$ of the short exact sequence
    \[
    0\to B_n\Omega^i_Y(\log E)\to Z_{n}\Omega^i_Y(\log E) \xrightarrow{C_n} \Omega^{i}_Y(\log E)\to 0,
    \]
    we have
    \[
    0\to \Omega^{d-i}_Y(\log E)(-E) \xrightarrow{C^{-1}_n} G_n\Omega^{d-i}_Y(\log E)(-E) \to \sHom_{\sO_Y}(B_n\Omega^i_Y(\log E), \omega_Y) \to 0,
    \]
    where we used (2) for the second term.
    Since we have an isomorphism \eqref{BZOmega(iterated),-E}
    \[
    \Omega^{d-i}_Y(\log E)(-E) \underset{\cong}{\xrightarrow{C^{-1}_n}} \frac{Z_n\Omega^{d-i}_Y(\log E)(-E)}{B_{n}\Omega^{d-1}_Y(\log E)(-E)},
    \]
    we obtain an isomorphism
    \[
    \sHom_{\sO_Y}(B_n\Omega^i_Y(\log E), \omega_Y)\cong \frac{F^n_{*}\Omega^{d-i}_Y(\log E)(-E)}{Z_n\Omega^{d-i}_Y(\log E)(-E)}.
    \]
    Thus, we conclude.
\end{proof}

\begin{lem}\label{lem:FOmega^{d-1}/ZOmega}
With notation as above, set $d=\dim Y$.
Then we have the following isomorphisms:
\begin{enumerate}
    \item[\textup{(1)}] $B_n\Omega^1_Y=B_n\Omega^1_Y(\log E)$ for all $n\geq 1$.
    \item[\textup{(2)}] $\frac{F^n_{*}\Omega^{d-1}_Y}{Z_n\Omega^{d-1}_Y}=\frac{F^n_{*}\Omega^{d-1}_Y(\log E)(-E)}{Z_n\Omega^{d-1}_Y(\log E)(-E)}$ for all $n\geq 1$.
    \item[\textup{(3)}] $B\Omega^d_Y\cong B\Omega^d_Y(\log E)(-E)$.
\end{enumerate}
\end{lem}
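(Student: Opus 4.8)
The plan is to prove the three assertions in the order (1), (3), (2), since (2) will be deduced from (1) by duality while (3) is self-contained.

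For (1), I would argue by induction on $n$, the base case being $n=1$, i.e. $B\Omega^1_Y=B\Omega^1_Y(\log E)$. Taking $i=1$ and $A=0$ in \eqref{eq:definition-of-B1}, both sheaves are the image of $F_*d\colon F_*\sO_Y\to F_*\Omega^1_Y(\log E)$, and since the differential $df$ of a regular function has no poles along $E$, this image already lies in the subsheaf $F_*\Omega^1_Y\subset F_*\Omega^1_Y(\log E)$; hence the natural comparison map $B\Omega^1_Y\to B\Omega^1_Y(\log E)$ is an isomorphism. For the inductive step I would compare, via the inclusion $\Omega^1_Y\hookrightarrow\Omega^1_Y(\log E)$, the short exact sequence \eqref{eq:B_ntoB_{n-1}} (with $i=1$, $A=0$) for the given $E$ with the one for the zero divisor. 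The left-hand terms are $F^n_*B\Omega^1_Y(\log E)$ and $F^n_*B\Omega^1_Y$, which agree by the base case, and the right-hand terms $B_n\Omega^1_Y(\log E)$ and $B_n\Omega^1_Y$ agree by the inductive hypothesis; the five lemma then forces the middle terms $B_{n+1}\Omega^1_Y(\log E)$ and $B_{n+1}\Omega^1_Y$ to coincide. The one point requiring care is that the comparison maps genuinely form a morphism of the two sequences \eqref{eq:B_ntoB_{n-1}}, which follows from the naturality of the iterated construction with respect to an inclusion of boundary divisors (cf.~\eqref{digram:Omega and G}).

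For (3), I would identify both sheaves with the kernel of the Frobenius trace $C\colon F_*\omega_Y\to\omega_Y$. On the one hand, \eqref{eq:Bomegaomega} gives $B\Omega^d_Y=\ker(C\colon F_*\omega_Y\to\omega_Y)$. On the other hand, taking $i=d$ and $pA=-E$ in \eqref{eq:BZOmega} and using $\Omega^d_Y(\log E)=\omega_Y(E)$, so that $\Omega^d_Y(\log E)(-E)\cong\omega_Y$, together with the vanishing $\Omega^{d+1}_Y=0$, which forces $Z\Omega^d_Y(\log E)(-E)=F_*\Omega^d_Y(\log E)(-E)=F_*\omega_Y$, turns \eqref{eq:BZOmega} into $0\to B\Omega^d_Y(\log E)(-E)\to F_*\omega_Y\xrightarrow{C}\omega_Y\to 0$. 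Thus $B\Omega^d_Y(\log E)(-E)$ is again the kernel of a Cartier operator $F_*\omega_Y\to\omega_Y$; since the Cartier isomorphism is compatible with the inclusion of log de Rham complexes and with the twist identifying $\Omega^d_Y(\log E)(-E)$ with $\omega_Y$, this operator has the same kernel as the trace $C$, and (3) follows.

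For (2), I would dualize (1). Applying $\sHom_{\sO_Y}(-,\omega_Y)$ and Lemma \ref{lem:preliminaries-duality}(3) with $i=1$ gives an isomorphism of $\sHom_{\sO_Y}(B_n\Omega^1_Y(\log E),\omega_Y)$ with $\frac{F^n_*\Omega^{d-1}_Y(\log E)(-E)}{Z_n\Omega^{d-1}_Y(\log E)(-E)}$, while the same lemma applied to the zero divisor (in which case all log and twist decorations disappear) identifies $\sHom_{\sO_Y}(B_n\Omega^1_Y,\omega_Y)$ with $\frac{F^n_*\Omega^{d-1}_Y}{Z_n\Omega^{d-1}_Y}$. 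By (1) the two sources coincide, so the right-hand sides are isomorphic; tracking the identifications shows the isomorphism is the natural comparison map induced by $\Omega^{d-1}_Y(\log E)(-E)\subset\Omega^{d-1}_Y$, which yields the asserted equality. The main obstacle I anticipate lies in the inductive step of (1): ensuring that the maps induced by $\Omega^1_Y\hookrightarrow\Omega^1_Y(\log E)$ assemble into an honest morphism of the sequences \eqref{eq:B_ntoB_{n-1}} so that the five lemma applies, and, relatedly in (2), that the dualities of Lemma \ref{lem:preliminaries-duality}(3) for $E$ and for the zero divisor are compatible with these comparison maps. The compatibility of the two Cartier operators in (3) is a similar but milder point, resolved by the locality and canonicity of the Cartier isomorphism.
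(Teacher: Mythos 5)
Your proposal is correct. For (1) and (2) you take exactly the paper's route: the $n=1$ case of (1) holds because both sheaves are the image of $F_*d$ on $F_*\sO_Y$ (which factors through $F_*\Omega^1_Y$), the inductive step compares the two short exact sequences \eqref{eq:B_ntoB_{n-1}} for $E$ and for the empty boundary, and (2) is obtained by dualizing (1) via Lemma \ref{lem:preliminaries-duality}(3) applied with $i=1$ in both the log and the boundary-free settings. The only divergence is (3): the paper deduces it formally from the $n=1$ case of (2) together with \eqref{eq:ZOmegaB}, which identifies $F_*\Omega^{d-1}_Y/Z\Omega^{d-1}_Y$ with $B\Omega^d_Y$ and likewise $F_*\Omega^{d-1}_Y(\log E)(-E)/Z\Omega^{d-1}_Y(\log E)(-E)$ with $B\Omega^d_Y(\log E)(-E)$, so no new input is needed; you instead prove (3) directly, identifying both sheaves with the kernel of a surjection $F_*\omega_Y\to\omega_Y$. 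Your argument is valid, but it carries one extra obligation that the paper's does not: you must check that the logarithmic Cartier map $Z\Omega^d_Y(\log E)(-E)=F_*\omega_Y\to\omega_Y$ and the Frobenius trace of \eqref{eq:Bomegaomega} are literally the same map. This is easily discharged --- the two maps agree on the dense open set $Y\setminus E$, and since $\omega_Y$ is torsion-free their difference, whose image is supported on $E$, must vanish --- so both kernels coincide as subsheaves of $F_*\omega_Y$. In exchange, your version of (3) is independent of (2), whereas the paper's is shorter and purely formal; the orders of proof ((1),(3),(2) versus (1),(2),(3)) are both consistent with the respective dependencies.
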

\begin{proof}
By definition, we have
    \begin{align} 
B \Omega_Y^1 &\coloneqq \mathrm{im}\left(F_*\sO_Y \xrightarrow{F_*d}  F_*\Omega_Y^{1}\right),\\ 
B\Omega_Y^1(\log E)&\coloneqq \mathrm{im}\left(F_*\sO_X \xrightarrow{F_*d}  F_*\Omega_Y^{1}(\log E)\right) \nonumber,
\end{align}
and thus (1) holds by \eqref{eq:B_ntoB_{n-1}}. (2) follows from (1) and duality (Lemma \ref{lem:preliminaries-duality}).
(3) is a consequence of the case $n=1$ of (2) and \eqref{eq:ZOmegaB}.
\end{proof}

\section{Proof of main theorems}

In this section, we prove the main theorems.

\begin{thm}[=Theorem \ref{Intromain:up to C}]\label{thm:steenbring vanishing up to nilpotent}
    Let $X$ be a normal variety with $d\coloneqq \dim\,X\geq 2$, and
    let $\pi\colon Y\to X$ be a log resolution such that the reduced $\pi$-exceptional divisor $E$ supports a $\pi$-ample divisor.
    Then 
    \[
    C^{-1}_{n}\colon R^{d-1}\pi_{*}\Omega^i_Y(\log E)(-E)\to R^{d-1}\pi_{*}G_n\Omega^i_Y(\log E)(-E)
    \]
    is a zero map for all $i\geq 0$ and $n\gg0$.
\end{thm}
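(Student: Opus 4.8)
The plan is to reduce the assertion to a local, top-degree statement and then recognize the map on $R^{d-1}\pi_*$ as a Frobenius-type operator that is nilpotent, with the nilpotence forced by the $\pi$-ample divisor supported on $E$. First I would localize: since I am claiming that a morphism of coherent $\sO_X$-modules vanishes, I may assume $X=\Spec R$ is local, so that $R^{d-1}\pi_*(-)=H^{d-1}(Y,-)$, and since the fibers of $\pi$ have dimension $\le d-1$ this is the top nonvanishing higher direct image. By the decomposition \eqref{eq:decomposition,specail} one has $C^{-1}_n=C^{-1}_{n,n_0}\circ C^{-1}_{n_0}$ for $n\ge n_0$, so it suffices to produce a single $n_0$ with $H^{d-1}(C^{-1}_{n_0})=0$. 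Because $H^{d-1}(Y,\Omega^i_Y(\log E)(-E))$ is a finite $R$-module and $R^{d-1}\pi_*$ commutes with the filtered colimit taken along the maps $C^{-1}_{n+1,n}$, this in turn reduces to showing that the image of $H^{d-1}(Y,\Omega^i_Y(\log E)(-E))$ in $\varinjlim_n H^{d-1}(Y,G_n\Omega^i_Y(\log E)(-E))$ is zero.

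The mechanism is cleanest in the toy case $i=0$, which I would isolate first. Here $B_n\Omega^0_Y=0$, so $G_n\Omega^0_Y(\log E)(-E)=F^n_*\sO_Y(-E)$ and $C^{-1}_{n+1,n}$ is the $p$-th power map; thus the transition map on cohomology is the Frobenius $F\colon H^{d-1}(Y,\sO_Y(-E))\to H^{d-1}(Y,\sO_Y(-E))$, and I must show $F$ is nilpotent. By hypothesis there is an effective divisor $D$ with $\Supp D=\Supp E$ and $\sO_Y(-D)$ $\pi$-ample; comparing $D$ with the reduced divisor $E$ gives constants $c,c'$ with $E\le cD$ and $D\le c'E$. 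The iterate $F^n$ sends a class to its $p^n$-th power, which is represented by sections of $\sO_Y(-p^nE)$, so $F^n$ factors through $H^{d-1}(Y,\sO_Y(-p^nE))\to H^{d-1}(Y,\sO_Y(-E))$; inserting $\sO_Y(-p^nE)\subseteq\sO_Y(-mD)\subseteq\sO_Y(-E)$ for a suitable $m=m(n)\to\infty$, this factors further through $H^{d-1}(Y,\sO_Y(-mD))$, which vanishes for $m\gg0$ by relative Serre--Fujita vanishing (valid since $d-1\ge1$). Hence $F^n=0$ for $n\gg0$. This step is where $\pi$-ampleness of a divisor supported on $E$ is indispensable: it is the only input that turns Frobenius iteration into an honest vanishing.

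For general $i$ I would run the same nilpotence argument after a dévissage recording the new contribution at each Frobenius step. The sequences \eqref{GGB,specail} exhibit $\mathrm{coker}(C^{-1}_{n+1,n})=F^n_*B\Omega^{i+1}_Y(\log E)(-E)$, and dually (via Lemma \ref{lem:preliminaries-duality} and \eqref{GGOmega/Z,iterated,specail}) the cokernel of $C^{-1}_n$ is $\sHom_{\sO_Y}(B_n\Omega^{d-i}_Y(\log E),\omega_Y)$. One may therefore pass to the Matlis dual, where the claim becomes that the iterated Cartier operator $C_n\colon H^1_E(Y,Z_n\Omega^{d-i}_Y(\log E))\to H^1_E(Y,\Omega^{d-i}_Y(\log E))$ has image zero for $n\gg0$; here the target is a fixed Artinian module, the images decrease in $n$, so by the descending chain condition they stabilize and it remains to see that the stable value is $0$. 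In either formulation the image of $C^{-1}_n$ is $Z_n\Omega^i_Y(\log E)(-E)/B_n\Omega^i_Y(\log E)(-E)$, and sections of $Z_n$ (being $n$-fold cycles of the de Rham complex) carry $p^n$-th-power coefficients; this reproduces the divisibility by $\sim p^nE$ that drove the case $i=0$, so the sandwich $-p^nE\le -mD\le -E$ together with Serre vanishing annihilates the stable image.

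The main obstacle is precisely the passage from $i=0$ to general $i$: for positive $i$ the inverse Cartier image is not a literal $p^n$-th power but a closed $i$-form whose coefficients are $p^n$-th powers, so the clean factorization ``$F^n$ factors through $\sO_Y(-p^nE)$'' must be replaced by careful bookkeeping of how $C^{-1}_n$ interacts with the twisting diagram \eqref{digram:Omega and G} and with the boundary subsheaves $B_n\Omega^i$ that are quotiented out. Controlling these boundary terms uniformly in $n$, so that the anti-ample estimate survives the dévissage and commutes with the Cartier operator, is the delicate point; everything else is either formal (localization, the colimit reduction, Grothendieck--Serre duality) or a direct application of relative Serre--Fujita vanishing.
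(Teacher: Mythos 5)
Your argument for $i=0$ is correct and is, in essence, the paper's mechanism specialized to that case: factor the iterated Frobenius through a sheaf twisted by $-p^nE$, squeeze $-p^nE\le -mD\le -E$ with $-D$ a $\pi$-ample divisor supported on $E$, and kill the top higher direct image by relative Serre vanishing (using $d-1\ge 1$ and $R^d\pi_*=0$). But the theorem's content is the case of arbitrary $i$ (the applications use $i=d-1$), and there your proposal has a genuine gap, which you yourself flag as ``the delicate point'' without resolving it. Worse, the bridge you propose is based on a false claim: sections of $Z_n\Omega^i_Y(\log E)$ are \emph{not} forms ``whose coefficients are $p^n$-th powers.'' For instance $x^{p^n-1}\,dx$ is a section of $Z_n\Omega^1$ (indeed $C_n(x^{p^n-1}dx)=dx$), and its coefficient $x^{p^n-1}$ is not a $p^n$-th power; the image of $C^{-1}_n$ is only defined modulo $B_n$, and representatives carry factors like $(x_1\cdots x_i)^{p^n-1}$ rather than $p^n$-th powers. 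Consequently the ``divisibility by $\sim p^nE$'' that drove your $i=0$ argument does not literally propagate, and neither your d\'evissage via \eqref{GGB,specail} nor the Matlis-dual/DCC reduction closes the argument: stabilization of a decreasing chain of images gives no vanishing, and you never prove the stable value is zero.

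The missing idea is that one should not fix the twist at $-E$ and then try to extract divisibility from $Z_n/B_n$; instead, the twist amplification is already built into the $\Q$-divisor form of the inverse Cartier operator, uniformly in $i$. Choose a $\pi$-ample anti-effective $\Q$-divisor $A$ with $\lceil A\rceil=0$ and $\Supp A=E$ (so $\lfloor A\rfloor=-E$), and take $n\gg 0$ with $A\le -\frac{1}{p^n}E$ and $R^{d-1}\pi_*\Omega^i_Y(\log E)(p^nA)=0$ (relative Serre vanishing, since $\lfloor p^nA\rfloor$ is an ever larger multiple of a $\pi$-ample divisor up to a bounded error). The operator $C^{-1}_n$ maps $\Omega^i_Y(\log E)(A)=\Omega^i_Y(\log E)(-E)$ to $G_n\Omega^i_Y(\log E)(p^nA)$, i.e.\ the target twist is automatically $p^nA$, and the comparison diagram \eqref{digram:Omega and G} with $A'=-\frac{1}{p^n}E$ factors the map in the statement through $R^{d-1}\pi_*G_n\Omega^i_Y(\log E)(p^nA)$. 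That module vanishes for free: by \eqref{def of GOmega} there is an exact sequence
\begin{equation*}
F^n_*R^{d-1}\pi_*\Omega^i_Y(\log E)(p^nA)\to R^{d-1}\pi_*G_n\Omega^i_Y(\log E)(p^nA)\to R^{d}\pi_*B_n\Omega^i_Y(\log E)(p^nA),
\end{equation*}
whose left term is zero by the choice of $n$ and whose right term is zero because the fibers of $\pi$ have dimension at most $d-1$. No control of the boundary sheaves $B_n$, no duality, no colimit, and no DCC argument is needed; the proof is uniform in $i$. Your factorization-through-negative-twists idea is the right heuristic, but only the $\Q$-twisted formalism makes it survive the passage from functions to $i$-forms.
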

\begin{proof}
   Fix $i\geq 0$.
   By assumption, we can take a $\pi$-ample anti-effective $\Q$-divisor $A$ such that $\lceil A \rceil=0$ and $\Supp(A)=E$.
   Take $n\gg0$ so that $A\leq -\frac{1}{p^n}E$ and $R^{d-1}\pi_{*}\Omega^{i}_Y(\log E)(p^nA)=0$ whose existence is ensured by Serre vanishing.
   By taking $A'=-\frac{1}{p^n}E$ in the diagram \eqref{digram:Omega and G}, we have the following commutative diagram:
   \[
\begin{tikzcd}
\Omega_Y^i(\log E)(A) \arrow[d,equal] \arrow[r,"C^{-1}_n"] & G_{n}\Omega_Y^i(\log E)(p^nA) \arrow[d] \\
\Omega_Y^i(\log E)(-E) \arrow[r,"C^{-1}_n"] & G_{n}\Omega_Y^i(\log E)(-E).
\end{tikzcd}  
\]
Here, we used equalities $\lfloor A'\rfloor= \lfloor p^nA'\rfloor =-E$.
Thus, it suffices to show that
   \[
   R^{d-1}\pi_{*}G_n\Omega^{i}_Y(\log E)(p^nA)=0.
   \]
   Consider the short exact sequence (see \eqref{def of GOmega})
   \[
   0\to B_n\Omega^{i}_Y(\log E)(p^nA)\to F^n_{*}\Omega^{i}_Y(\log E)(p^nA)\to G_n\Omega^{i}_Y(\log E)(p^nA)\to 0.
   \]
   Then we have an exact sequence
   \begin{small}
     \[
   F^n_{*}R^{d-1}\pi_{*}\Omega^{i}_Y(\log E)(p^nA)\to R^{d-1}\pi_{*}G_n\Omega^{i}_Y(\log E)(p^nA) \to R^d\pi_{*}B_n\Omega^{i}_Y(\log E)(p^nA)=0.
   \]  
   \end{small}
   By the deinition of $n$, we have
   \[
   R^{d-1}\pi_{*}\Omega^{i}_Y(\log E)(p^nA)=0.
   \]
   Therefore, we obtain
   \[
   R^{d-1}\pi_{*}G_n\Omega^{i}_Y(\log E)(p^nA)=0,
   \]
   and the assertion holds.
\end{proof}

\begin{prop}\label{prop:higherdim}
    Let $X$ be a normal variety with $d\coloneqq \dim\,X$, and
    let $\pi\colon Y\to X$ be a log resolution with reduced divisor $E$. 
    If $R^{j-2}\pi_{*}\omega_Y=R^{j-1}\pi_{*}\omega_Y=0$, then 
    \[
    C^{-1}_{n}\colon R^{j}\pi_{*}\Omega^{d-1}_Y(\log E)(-E)\to R^{j}\pi_{*}G_n\Omega^{d-1}_Y(\log E)(-E)
    \]
    is injective for all $n\geq0$.
\end{prop}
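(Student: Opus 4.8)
The plan is to factor $C^{-1}_n$ into its elementary inverse Cartier steps and to show that each step stays injective after applying $R^j\pi_*$. By the decomposition \eqref{eq:decomposition,specail}, the map $C^{-1}_n$ is the composite
\[
\Omega^{d-1}_Y(\log E)(-E)\xrightarrow{C^{-1}_{1,0}}G_1\Omega^{d-1}_Y(\log E)(-E)\to\cdots\xrightarrow{C^{-1}_{n,n-1}}G_n\Omega^{d-1}_Y(\log E)(-E),
\]
where $G_0\Omega^{d-1}_Y(\log E)(-E)=\Omega^{d-1}_Y(\log E)(-E)$. Since $R^j\pi_*$ is a functor, it respects this composition, so it suffices to prove that each
\[
C^{-1}_{m+1,m}\colon R^j\pi_*G_m\Omega^{d-1}_Y(\log E)(-E)\to R^j\pi_*G_{m+1}\Omega^{d-1}_Y(\log E)(-E)
\]
is injective for $0\le m\le n-1$, because a composite of injections is injective.

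For a fixed $m$ I would apply $R^{\bullet}\pi_*$ to the short exact sequence \eqref{GGB,specail} specialized to $i=d-1$, namely
\[
0\to G_m\Omega^{d-1}_Y(\log E)(-E)\xrightarrow{C^{-1}_{m+1,m}}G_{m+1}\Omega^{d-1}_Y(\log E)(-E)\to F^m_*B\Omega^{d}_Y(\log E)(-E)\to 0.
\]
The relevant segment of the long exact sequence is
\[
R^{j-1}\pi_*\!\left(F^m_*B\Omega^{d}_Y(\log E)(-E)\right)\xrightarrow{\delta}R^j\pi_*G_m\Omega^{d-1}_Y(\log E)(-E)\xrightarrow{C^{-1}_{m+1,m}}R^j\pi_*G_{m+1}\Omega^{d-1}_Y(\log E)(-E),
\]
so the kernel of $C^{-1}_{m+1,m}$ on $R^j\pi_*$ equals the image of $\delta$; hence it is enough to show that the source of $\delta$ vanishes. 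Because the absolute Frobenius $F$ is finite and commutes with $\pi$, one has $R^{j-1}\pi_*(F^m_*\mathcal{G})\cong F^m_*R^{j-1}\pi_*\mathcal{G}$ for every coherent $\mathcal{G}$, and by Lemma \ref{lem:FOmega^{d-1}/ZOmega}(3) there is an isomorphism $B\Omega^d_Y(\log E)(-E)\cong B\Omega^d_Y$. Therefore the source of $\delta$ is $F^m_*R^{j-1}\pi_*B\Omega^d_Y$, and the whole problem reduces to proving $R^{j-1}\pi_*B\Omega^d_Y=0$.

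This final vanishing is exactly where the hypotheses are consumed. I would apply $R^{\bullet}\pi_*$ to the Frobenius-trace sequence \eqref{eq:Bomegaomega},
\[
0\to B\Omega^d_Y\to F_*\omega_Y\xrightarrow{C}\omega_Y\to 0,
\]
which yields the exact fragment
\[
R^{j-2}\pi_*\omega_Y\to R^{j-1}\pi_*B\Omega^d_Y\to R^{j-1}\pi_*(F_*\omega_Y)\cong F_*R^{j-1}\pi_*\omega_Y.
\]
By assumption $R^{j-2}\pi_*\omega_Y=0$ and $R^{j-1}\pi_*\omega_Y=0$, so both outer terms vanish and $R^{j-1}\pi_*B\Omega^d_Y=0$. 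Tracing back, $\delta=0$ for every $m$, each $C^{-1}_{m+1,m}$ is injective on $R^j\pi_*$, and consequently $C^{-1}_n$ is injective on $R^j\pi_*\Omega^{d-1}_Y(\log E)(-E)$, which is the assertion (the case $n=0$ being trivial).

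The main obstacle here is conceptual rather than computational: one must recognize that in top exterior degree $i=d-1$ the cokernel in \eqref{GGB,specail} is governed by $B\Omega^d_Y$, which is precisely the kernel of the Frobenius trace on $\omega_Y$ appearing in \eqref{eq:Bomegaomega}. This identification is the mechanism that converts the assumed Grauert--Riemenschneider-type vanishing $R^{j-2}\pi_*\omega_Y=R^{j-1}\pi_*\omega_Y=0$ into injectivity of the inverse Cartier action; the remaining ingredients (the commutation of $F^m_*$ with $\pi_*$ and the assembly of the single steps into the composite) are routine.
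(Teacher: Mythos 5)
Your proof is correct and follows essentially the same route as the paper: both use the sequence \eqref{eq:Bomegaomega} together with the hypotheses to get $R^{j-1}\pi_{*}B\Omega^d_Y=0$, then the sequence \eqref{GGB,specail} with the identification $B\Omega^d_Y(\log E)(-E)\cong B\Omega^d_Y$ from Lemma \ref{lem:FOmega^{d-1}/ZOmega}(3) to deduce injectivity of each step $C^{-1}_{m+1,m}$ on $R^j\pi_{*}$, and finally the decomposition \eqref{eq:decomposition,specail} to conclude. Your write-up is merely more explicit about the connecting homomorphism and the commutation of $F^m_{*}$ with $R^{j-1}\pi_{*}$, which the paper leaves implicit.
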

\begin{proof}
    By \eqref{eq:Bomegaomega}, we have the short exact sequence 
   \[
    0\to B\Omega^d_Y \to F_{*}\omega_Y \xrightarrow{C} \omega_Y\to 0.
    \]
   Then, by assumption that $R^{j-2}\pi_{*}\omega_Y=R^{j-1}\pi_{*}\omega_Y=0$, we have $R^{j-1}\pi_{*}B\Omega^{d}_Y=0$.
    By \eqref{GGB,specail}, we have a short exact sequence
    \begin{small}
        \[
    0\to G_n\Omega^{d-1}_Y(\log E)(-E)\xrightarrow{C^{-1}_{n+1,n}} G_{n+1}\Omega^{d-1}_Y(\log E)(-E)\to F^n_{*}B\Omega^{d}_Y(\log E)(-E)\to 0
    \]
    \end{small}
    for all $n\geq 0$.
    Furthermore, by Lemma \ref{lem:FOmega^{d-1}/ZOmega} (3),
    we have an isomorphism
    \[
    B\Omega^{d}_Y(\log E)(-E)\cong B\Omega^d_Y.
    \]
    Thus, the map
    \[
    R^{j}\pi_{*}G_n\Omega^{d-1}(\log E)(-E)\xrightarrow{C_{n+1,n}^{-1}} R^{j}\pi_{*}G_{n+1}\Omega^{d-1}(\log E)(-E)
    \]
    is injective for all $n\geq0$.
    By \eqref{eq:decomposition,specail}, we have a decomposition
    \[
    C_n^{-1}=C_{n,n-1}^{-1} \circ\cdots \circ C_{1,0}^{-1} 
    \]
    and thus the map
    \[
    C_{n}^{-1}\colon R^{d-1}\pi_{*}\Omega^{d-1}_Y(\log E)(-E)\to R^{d-1}\pi_{*}G_n\Omega^{d-1}_Y(\log E)(-E)
    \]
    is injective for all $n\geq0$, as desired.
\end{proof}

\begin{lem}\label{lem:vanishing of higher B}
    Let $X$ be a normal variety, and let $\pi\colon Y\to X$ be a resolution.
    Suppose that $R\pi_{*}\sO_Y\cong \sO_X$. 
    Then $R\pi_{*}B_n\Omega^1_Y\cong \mathcal{B}_{n,X}$ for all $n\geq 1$.
\end{lem}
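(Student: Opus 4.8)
The plan is to reduce the statement to the vanishing of the higher direct images of the truncated Witt vector sheaves, i.e.\ to the isomorphism $R\pi_{*}W_n\sO_Y\cong W_n\sO_X$, and then to extract the lemma from the defining sequence of $B_n\Omega^1_Y$ in \eqref{eq:WOWOB}. The geometric input $R\pi_{*}\sO_Y\cong\sO_X$ enters only through the base case of an induction on $n$; everything else is formal manipulation of the exact sequences already set up in the paper.

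The one recurring technical point I would isolate first is that higher direct images commute with the Witt-vector Frobenius pushforward. Writing $F$ for the (finite, hence affine) absolute Frobenius, the square relating the Frobenii of $Y$ and $X$ to $\pi$ commutes, so $RF_{*}=F_{*}$ on each scheme and therefore $R\pi_{*}\circ F_{*}\cong F_{*}\circ R\pi_{*}$ as functors. In particular $R\pi_{*}F_{*}W_{m}\sO_Y\cong F_{*}R\pi_{*}W_{m}\sO_Y$, and the Frobenius itself, being a natural transformation, is carried to the Frobenius on $X$ under this identification.

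Next I would prove $R\pi_{*}W_n\sO_Y\cong W_n\sO_X$ by induction on $n$. The case $n=1$ is the hypothesis $R\pi_{*}\sO_Y\cong\sO_X$. For the inductive step I apply $R\pi_{*}$ to the Witt sequence \eqref{eq:WWO} on $Y$,
\[
0\to F_{*}W_{n-1}\sO_Y\xrightarrow{V}W_n\sO_Y\xrightarrow{R_{n-1}}\sO_Y\to0,
\]
and use the commutation above together with the inductive hypothesis to identify the outer terms of the resulting triangle with $F_{*}W_{n-1}\sO_X$ and $\sO_X$. Comparing this triangle with \eqref{eq:WWO} on $X$ via the natural comparison maps $W_m\sO_X\to R\pi_{*}W_m\sO_Y$ (functoriality of the Witt construction, compatible with $V$ and $R$), the two outer vertical arrows are isomorphisms, so by the two-out-of-three property for morphisms of distinguished triangles the middle arrow $W_n\sO_X\to R\pi_{*}W_n\sO_Y$ is an isomorphism as well.

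Finally I apply $R\pi_{*}$ to the top row of \eqref{eq:WOWOB},
\[
0\to W_n\sO_Y\xrightarrow{F}F_{*}W_n\sO_Y\to B_n\Omega^1_Y\to0,
\]
obtaining a distinguished triangle whose first two terms are, by the previous step and the commutation, $W_n\sO_X\xrightarrow{F}F_{*}W_n\sO_X$, the Frobenius on $X$. Since $X$ is reduced this map is injective with cokernel $\mathcal{B}_{n,X}$ by definition, so the triangle identifies $R\pi_{*}B_n\Omega^1_Y$ with the sheaf $\mathcal{B}_{n,X}$ placed in degree $0$, which is the assertion. The main obstacle is precisely the Witt-vector vanishing $R\pi_{*}W_n\sO_Y\cong W_n\sO_X$ of the third paragraph: this is where the hypothesis $R\pi_{*}\sO_Y\cong\sO_X$ is genuinely used, and where one must check that the comparison maps are natural enough to feed into the five-lemma; the remaining steps are formal.
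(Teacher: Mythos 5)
Your proposal is correct and follows essentially the same route as the paper: the paper's (very terse) proof likewise deduces $R\pi_{*}W_n\sO_Y\cong W_n\sO_X$ from the sequence \eqref{eq:WWO} and then concludes by applying $R\pi_{*}$ to the top row of \eqref{eq:WOWOB}, exactly as you do. Your write-up simply makes explicit the details the paper leaves implicit, namely the induction on $n$, the commutation $R\pi_{*}\circ F_{*}\cong F_{*}\circ R\pi_{*}$, and the compatibility of the comparison maps with $V$, $R$, and $F$.
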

\begin{proof}
    By the short exact sequence \eqref{eq:WWO}
    \[
    0\to F_{*}W_{n-1}\sO_Y\xrightarrow{V} W_n\sO_Y\xrightarrow{R_{n-1}} \sO_Y\to 0,
    \]
    we have $R\pi_{*}W_n\sO_Y=W_n\sO_X$ for all $n\geq1$.
    By the short exact sequence \eqref{eq:WOWOB}
    \[
    0\to W_{n}\sO_Y\xrightarrow{F} F_{*}W_n\sO_Y\to B_n\Omega^1_Y\to 0,
    \]
    we conclude.
\end{proof}

\begin{defn}\label{def:rational singularities}
    Let $X$ be a normal variety.
    We say that $X$ has \textit{rational singularities} if $X$ is Cohen--Macaulay, and for any resolution $\pi\colon Y\to X$, an isomorphism $R\pi_{*}\sO_Y=\sO_X$ holds.
\end{defn}
\begin{rem}\label{rem:rational}
   If $X$ has rational singularities, then $R^j\pi_{*}\omega_Y=0$ for all $j>0$.
   In fact, from local duality we can observe that the vanishing $R^j\pi_{*}\omega_Y=0$ is equivalent to $H^{d-j}_{\m_x}(\sO_X)=0$ for all $x\in X$.
\end{rem}
% \begin{rem}\label{rem:rational}
%    If $X$ has rational singularities, then $R^j\pi_{*}\omega_Y=0$ for all $j>0$.
%    For the proof, by taking a completion at a closed point, we may assume that $X=\Spec R$, where $(R,\m)$ is a complete local ring.
%     Then we have quasi-isomorphisms
%     \begin{align*}
% R\pi_{*}\omega_Y\cong R\pi_{*}R\sHom(\sO_Y,\omega^{\bullet}_Y)[d]&\cong R\sHom_{\sO_X}(R\pi_{*}\sO_Y,\omega^{\bullet}_X)[d]\\
% &\cong R\sHom_{\sO_X}((R\Gamma_{\m}(R\pi_{*}\sO_Y),E)[d]\\
% &\cong R\sHom_{\sO_X}(R\Gamma_{\m}(\sO_X), E)[d]\\
% &\cong \sHom_{\sO_X}(H^d_{\m}(\sO_X),E),
%     \end{align*}
%     where $E$ is an injective hull of $R/\m$ and the third isomorphism is local duality \cite[\href{https://stacks.math.columbia.edu/tag/0A84}{Tag 0A84}]{stacks-project}.
%     Thus, we have $R^j\pi_{*}\omega_Y=0$ for all $j>0$.
% \end{rem}

\begin{prop}\label{prop:threedim}
    Let $X$ be a normal variety with rational singularities with $d\coloneqq \dim X$, and let $\pi\colon Y\to X$ be a log resolution with reduced exceptional divisor $E$.
    Suppose that $X$ is quasi-$F$-injective.
    Then 
    \[
    C^{-1}_{n}\colon R^{2}\pi_{*}\Omega^{d-1}_Y(\log E)(-E)\to R^{2}\pi_{*}G_n\Omega^{d-1}_Y(\log E)(-E)
    \]
    is injective for all $n\geq 0$.
\end{prop}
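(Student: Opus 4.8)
The plan is to show that $C^{-1}_n$ is injective by computing its kernel on $R^2\pi_\ast$ and proving that kernel vanishes. Localize at a closed point, so that $X=\Spec R$ with $R$ complete local Cohen--Macaulay of dimension $d$. Specializing the exact sequence \eqref{GGOmega/Z,iterated,specail} to outer index $0$ (so that $G_0=\Omega^{d-1}_Y(\log E)(-E)$ and the map is $C^{-1}_n$) and identifying the cokernel by Lemma~\ref{lem:preliminaries-duality}(3) together with Lemma~\ref{lem:FOmega^{d-1}/ZOmega}(1), I obtain
\[
0\to\Omega^{d-1}_Y(\log E)(-E)\xrightarrow{C^{-1}_n}G_n\Omega^{d-1}_Y(\log E)(-E)\to\sHom_{\sO_Y}(B_n\Omega^1_Y,\omega_Y)\to0.
\]
Applying $R\pi_\ast$, the kernel of $C^{-1}_n$ on $R^2\pi_\ast\Omega^{d-1}_Y(\log E)(-E)$ is the image of the connecting map $\delta_n\colon R^1\pi_\ast\sHom_{\sO_Y}(B_n\Omega^1_Y,\omega_Y)\to R^2\pi_\ast\Omega^{d-1}_Y(\log E)(-E)$, so it suffices to prove that $\delta_n=0$ for every $n$.

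Next I would identify the source of $\delta_n$ and translate the hypothesis. Since $X$ has rational singularities, $R\pi_\ast\sO_Y=\sO_X$; hence $R\pi_\ast W_n\sO_Y=W_n\sO_X$, $R\pi_\ast B_n\Omega^1_Y=\mathcal B_{n,X}$ (Lemma~\ref{lem:vanishing of higher B}), and, from the pushout defining $Q_{Y,n}$, also $R\pi_\ast Q_{Y,n}=Q_{X,n}$. Grothendieck duality along $\pi$ (for the locally free $B_n\Omega^1_Y$) then gives $R^k\pi_\ast\sHom_{\sO_Y}(B_n\Omega^1_Y,\omega_Y)=\sExt^k_{\sO_X}(\mathcal B_{n,X},\omega_X)$, so $\delta_n$ is a map $\sExt^1_{\sO_X}(\mathcal B_{n,X},\omega_X)\to R^2\pi_\ast\Omega^{d-1}_Y(\log E)(-E)$. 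Because $X$ is Cohen--Macaulay the only nonzero local cohomology of $\sO_X$ sits in degree $d$, so quasi-$F$-injectivity reduces to the injectivity of $H^d_{\m}(\sO_X)\to H^d_{\m}(Q_{X,n})$ for some $n$; by the local-cohomology sequence of $0\to\sO_X\xrightarrow{\Phi_{X,n}}Q_{X,n}\to\mathcal B_{n,X}\to0$ this is equivalent to the vanishing of the connecting map $H^{d-1}_{\m}(\mathcal B_{n,X})\to H^d_{\m}(\sO_X)$, i.e.\ (Matlis dually) to the vanishing of the map $(\partial'_n)^\vee\colon\omega_X\to\sExt^1_{\sO_X}(\mathcal B_{n,X},\omega_X)$ obtained by dualizing that sequence.

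The crux is to deduce $\delta_n=0$ from $(\partial'_n)^\vee=0$. My plan is to splice the sequence of the first paragraph with the dual of the de Rham--Witt sequence \eqref{eq:WOWOB}, which (after dualizing) exhibits $\sExt^1_{\sO_X}(\mathcal B_{n,X},\omega_X)$ as a quotient coming from the Witt-vector Frobenius; using $R\pi_\ast Q_{Y,n}=Q_{X,n}$ and the identification of the iterated Cartier operator with the restriction $R_{n-1}$ (Remark~\ref{rem:B and BOmega}), one compares this splice with the dual of $0\to\sO_X\to Q_{X,n}\to\mathcal B_{n,X}\to0$ to transport the vanishing of $(\partial'_n)^\vee$ onto $\delta_n$. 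I expect this comparison to be the main obstacle. A naive splicing only reproves the \emph{composite} statement $\delta_n\circ(\partial'_n)^\vee=0$, which is automatic once $(\partial'_n)^\vee=0$ and does not bound $\delta_n$ itself; the Frobenius extension built from $F^{\bullet}_\ast\sO_X$ and the quasi-$F$-split extension built from $Q_{X,n}$ of $\mathcal B_{n,X}$ by $\sO_X$ genuinely differ when $X$ is only quasi-$F$-injective, so one must track both extension classes through the de Rham--Witt comparison and show they induce the same connecting homomorphism after $R\pi_\ast$. Furthermore, this must be carried out \emph{at the same level $n$}: since the kernels $\ker C^{-1}_n$ increase with $n$, controlling small $n$ is not enough, and one exploits that quasi-$F$-injectivity persists for all large $n$ in order to annihilate $\delta_n$ uniformly.
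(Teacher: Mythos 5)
Your first two paragraphs are essentially correct and in fact recover, in Matlis--Grothendieck dual form, the same reductions the paper makes: the cokernel of $C^{-1}_n$ on sheaves is $\sHom_{\sO_Y}(B_n\Omega^1_Y,\omega_Y)$ (via \eqref{GGOmega/Z,iterated,specail}, Lemma \ref{lem:preliminaries-duality}(3) and Lemma \ref{lem:FOmega^{d-1}/ZOmega}(1)), its higher direct images are $\sExt$'s of $\mathcal{B}_{n,X}$ by Lemma \ref{lem:vanishing of higher B} and duality, and quasi-$F$-injectivity plus Cohen--Macaulayness is equivalent to the vanishing of a connecting map attached to $0\to\sO_X\to Q_{X,l}\to\mathcal{B}_{l,X}\to 0$. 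But the third paragraph, where the proposition would actually have to be proved, contains no proof: you describe a plan (splicing with the dual of \eqref{eq:WOWOB}) and then correctly observe yourself that its naive execution only yields $\delta_n\circ(\partial'_n)^\vee=0$, which is vacuous and does not force $\delta_n=0$. So the crux --- passing from the hypothesis to the vanishing of the connecting map --- is left open. Moreover, your framework creates an extra unproven obligation: since you attack the ``total'' extension $0\to G_0\to G_n\to\sHom(B_n\Omega^1_Y,\omega_Y)\to 0$, you need the hypothesis at the same level $n$ (or all large $n$), and the assertion that quasi-$F$-injectivity ``persists for all large $n$'' is nowhere justified; Definition \ref{def:quasi-F-injective} supplies only a single $l$.

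The missing idea, which is how the paper avoids exactly the obstacle you ran into, is to never compare extension classes of the total extension at all, but to prove injectivity of each \emph{one-step} map $C^{-1}_{n+1,n}\colon R^{2}\pi_{*}G_n\Omega^{d-1}_Y(\log E)(-E)\to R^{2}\pi_{*}G_{n+1}\Omega^{d-1}_Y(\log E)(-E)$ and then compose via \eqref{eq:decomposition,specail}. Fix the single $l$ given by quasi-$F$-injectivity. By \eqref{GGOmega/Z,iterated,specail} there are two extensions with the \emph{same} subobject $G_n\Omega^{d-1}_Y(\log E)(-E)$, with middle terms $G_{n+1}$ and $G_{n+l}$ and with quotients $F^n_{*}\bigl(F_{*}\Omega^{d-1}_Y(\log E)(-E)/Z\Omega^{d-1}_Y(\log E)(-E)\bigr)$ and $F^n_{*}\bigl(F^l_{*}\Omega^{d-1}_Y(\log E)(-E)/Z_l\Omega^{d-1}_Y(\log E)(-E)\bigr)$; the map between these quotients is $F^n_*$ of the dual of the iterated Cartier operator $C_{l-1}\colon B_l\Omega^1_Y\to B\Omega^1_Y$, which by Lemma \ref{lem:vanishing of higher B} and Remark \ref{rem:B and BOmega} is the restriction $R_{l-1}\colon\mathcal{B}_{l,X}\to\mathcal{B}_X$. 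Your own translation of the hypothesis (the diagram \eqref{diagram:Q Vs O}, surjectivity of $H^{d-1}_{\m}(Q_{X,l})\to H^{d-1}_{\m}(\mathcal{B}_{l,X})$, and $H^{d-1}_{\m}(F_*\sO_X)=0$) shows precisely that this map vanishes on $H^{d-1}_{\m}$, hence by local and Grothendieck duality it induces the zero map on the relevant $R^1\pi_*$'s. The ladder of long exact sequences, with the identity on the $R^2\pi_{*}G_n$ column and zero on the $R^1\pi_*$ column, then forces the connecting map of the one-step extension to be zero, so $C^{-1}_{n+1,n}$ is injective --- for every $n$, using only the one level $l$. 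This sidesteps the de Rham--Witt extension-class comparison entirely and eliminates the need for quasi-$F$-injectivity at all levels; it is the step your proposal would still have to supply.
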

\begin{proof}
    Since the assertion is local on $X$, we may assume that $X=\Spec R$ is affine.
    If $d\leq 2$, then the assertion is obvious since $R^{2}\pi_{*}\Omega^{d-1}_Y(\log E)(-E)=0$.
    Thus, we may assume that $d\geq 3$.
    Let $x\in X$ be a closed point, and let $\m\subset R$ be the maximum ideal corresponds to $x\in X$.
    
    Since $X$ is quasi-$F$-injective, we can take $l\geq1$ such that $H^{d}_{\m}(\sO_X)\to H^{d}_{\m}(Q_{X,l})$ is injective.
By \eqref{diagram:Q Vs O}, we have the following commutative diagram
\[
\begin{tikzcd}
H^{d-1}_{\m}(Q_{X,l})\arrow[r, twoheadrightarrow]\arrow[d] & H^{d-1}_{\m}(\mathcal{B}_{l,X})\arrow[d,"R_{l-1}"]\\
  \mathllap{0\,=\,\, } H^{d-1}_{\m}(F_{*}\sO_X)\arrow[r]& H^{d-1}_{\m}(\mathcal{B}_{X}),
\end{tikzcd}
\]
where the top horizontal arrow is surjective $H^{d}_{\m}(\sO_X)\to H^{d}_{\m}(Q_{X,l})$ is injective, and $H^{d-1}_{\m}(F_{*}\sO_X)=0$ since $X$ is Cohen--Macaulay.
Thus, the restriction 
\[
H^{d-1}_{\m}(\mathcal{B}_{l,X})\xrightarrow{R_{l-1}} H^{d-1}_{\m}(\mathcal{B}_X)
\] is a zero map.
By Lemma \ref{lem:vanishing of higher B} and Remark \ref{rem:B and BOmega}, the above map coincides with a map
\[
H^{d-1}_{\m}(R\pi_{*}B_l\Omega^1_Y)\xrightarrow{C_{l-1}} H^{d-1}_{\m}(R\pi_{*}B\Omega^1_Y),
\]
and thus this is also a zero map.
Recall that $B_n\Omega^1_Y=B_n\Omega^1_Y(\log E)$ for all $n\geq 0$ by Lemma \ref{lem:FOmega^{d-1}/ZOmega} (1).
Thus, local duality \cite[\href{https://stacks.math.columbia.edu/tag/0AAK}{Tag 0AAK}]{stacks-project} yields that
\[
\mathcal{H}^{-d+1}R\sHom_{\sO_X}(R\pi_{*}B\Omega^1_Y(\log E),\omega^{\bullet}_X)\to \mathcal{H}^{-d+1}R\sHom_{\sO_X}(R\pi_{*}B_l\Omega^1_Y(\log E),\omega^{\bullet}_X)
\]
is a zero map.
By Grothendieck duality, 
\[
R^1\pi_{*}\sHom_{\sO_Y}(B\Omega^1_Y(\log E),\omega_Y)\to R^1\pi_{*}\sHom_{\sO_Y}(B_l\Omega^1_Y(\log E),\omega_Y)
\]
is a zero map.
Since we have
\[
\sHom_{\sO_Y}(B_n\Omega^1_Y(\log E),\omega_Y)\cong \frac{F^n_{*}\Omega^{d-1}_Y(\log E)(-E)}{Z_n\Omega^{d-1}_Y(\log E)(-E)}
\]
for all $n\geq 0$ by Lemma \ref{lem:preliminaries-duality} (3),
\[
R^{1}\pi_{*}\left(\frac{F_{*}\Omega^{d-1}_Y(\log E)(-E)}{Z\Omega^{d-1}_Y(\log E)(-E)}\right)\to R^{1}\pi_{*}\left(\frac{F^l_{*}\Omega^{d-1}_Y(\log E)(-E)}{Z_l\Omega^{d-1}_Y(\log E)(-E)}\right)
\]
is a zero map.
By \eqref{GGOmega/Z,iterated,specail}, we have the following commutative diagram:
\begin{tiny}
\[
\begin{tikzcd}
F^n_{*}R^{1}\pi_{*}\left(\frac{F_{*}\Omega^{d-1}_Y(\log E)(-E)}{Z\Omega^{d-1}_Y(\log E)(-E)}\right)\arrow[r]\arrow[d,"0"] & R^{2}\pi_{*}G_n\Omega^{d-1}_Y(\log E)(-E) \arrow[r,"C_{n+1,n}^{-1}"]\arrow[d,equal] & R^{2}\pi_{*}G_{n+1}\Omega^{d-1}_Y(\log E)(-E) \arrow[d,"C^{-1}_{n+l,n+1}"] \\
F^n_{*}R^{1}\pi_{*}\left(\frac{F^l_{*}\Omega^{d-1}_Y(\log E)(-E)}{Z_l\Omega^{d-1}_Y(\log E)(-E)}\right)\arrow[r] &R^{2}\pi_{*}G_n\Omega^{d-1}_Y(\log E)(-E)\arrow[r,"C^{-1}_{n+l,n}"]& R^{2}\pi_{*}G_{n+l}\Omega^{d-1}_Y(\log E)(-E)
\end{tikzcd}
\]
\end{tiny}
for all $n\geq 0$.
By diagram chasing, 
\[
C^{-1}_{n+1,n}\colon R^{2}\pi_{*}G_n\Omega^{d-1}_Y(\log E)(-E)\to R^{2}\pi_{*}G_{n+1}\Omega^{d-1}_Y(\log E)(-E)
\]
is injective for all $n\geq 0$.
Since we have a decomposition \eqref{eq:decomposition,specail}, 
    \[
    C_n^{-1}=C_{n,n-1}^{-1} \circ\cdots \circ C_{1,0}^{-1}, 
    \]
    the map
\[
    C_{n}^{-1}\colon R^{2}\pi_{*}\Omega^{d-1}_Y(\log E)(-E)\to R^{2}\pi_{*}G_n\Omega^{d-1}_Y(\log E)(-E)
    \]
    is injective for all $n\geq 0$, as desired.
\end{proof}

\begin{proof}[Proof of Theorem \ref{introthm:injective action}]
    The assertion follows from Propositions \ref{prop:higherdim} and \ref{prop:threedim}.
\end{proof}

\begin{proof}[Proof of Theorem \ref{Introthm:F-inj}]
    The assertion follows from Theorems \ref{Intromain:up to C} and \ref{introthm:injective action}.
\end{proof}

\begin{proof}[Proof of Theorem \ref{Introthm:SFR}]
    Since $X$ is strongly $F$-regular, it is $F$-injective.
    By \cite[Theorem 3.5]{Baudin-Kawakami-Rysler}, $X$ has rational singularities.
    Therefore, the assertion follows from Theorem \ref{Introthm:F-inj}.
\end{proof}

\begin{proof}[Proof of Theorem \ref{Introthm:klt}]
    By \cite[Corollary 1.3]{ABL}, $X$ has rational singularities.
    By \cite[Theorem A]{KTTWYY2}, $X$ is quasi-$F$-pure, and in particular, quasi-$F$-injective (Remark \ref{rem:quasi-F-inj}).
    Then the assertion follows from Theorems \ref{Intromain:up to C} and \ref{introthm:injective action}.
\end{proof}

\begin{eg}\label{eg:counterex}
   Fix a positive integer $d\geq 3$ and a prime number $p$.
   By \cite[Theorem 2 and Corollary on p.~519]{Mukai}, there exists a smooth projective variety $Z$ of $\dim Z=d-1$ and an ample Cartier divisor $A$ on $Z$ such that $H^0(Z,\Omega^1_Z(-A))\neq 0$.
   Let $X$ be the affine cone $\Spec \bigoplus_{m\geq 0} H^0(Z,\sO_Z(mA))$, and let $\pi\colon Y\to X$ be the blow-up at the vertex of the affine cone.
   Then $\pi\colon Y\to X$ is a log resolution, and 
   \[
   R^{d-1}\pi_{*}\Omega^{d-1}_Y(\log E)(-E)\neq 0 .
   \]
   In fact, by the reside exact sequence, we have a short exact sequence
   \[
   0\to \Omega^{d-1}_Y(-E) \to  \Omega^{d-1}_Y(\log E)(-E) \to \Omega^{d-2}_E(-E) \to 0.
   \]
   Then we have an exact sequence
   \[
   R^{d-1}\pi_{*}\Omega^{d-1}_Y(\log E)(-E) \to R^{d-1}\pi_{*}\Omega^{d-2}_E(-E) \to R^d\pi_{*}\Omega^{d-1}_Y(-E)=0.
   \]
   Since we have
   \[
   R^{d-1}\pi_{*}\Omega^{d-1}_E(-E)=H^{d-1}(Z, \Omega^{d-1}_Z(A))\cong H^0(Z,\Omega^1_Z(-A))\neq 0,
   \]
   we conclude.
\end{eg}

\begin{rem}
    It is known that Steenbrink vanishing is related to extension theorems for differential forms (see \cite{GKK}).  
Let $X$ be a normal variety, and let $\pi\colon Y \to X$ be a log resolution with reduced exceptional divisor $E$.  
We consider the \emph{logarithmic extension property} for differential forms on $X$, namely, the surjectivity of the natural restriction map:
\[
\pi_{*}\Omega^i_Y(\log E) \hookrightarrow \Omega^{[i]}_X,
\]
where $\Omega^{[i]}_X$ denotes the sheaf of reflexive differential $i$-forms on $X$.  
Since the property is local on $X$, we may assume $X$ is affine.  
In this setting, we have the exact sequence:
\[
0\to \pi_{*}\Omega^i_Y(\log E) \to \Omega^{[i]}_X \to H^{1}_E(Y,\Omega^i_Y(\log E)),
\]
so that the logarithmic extension property reduces to the vanishing:
\[
H^{1}_E(Y,\Omega^i_Y(\log E)) = 0.
\]
Assume further that $X$ has only isolated singularities.  
Then, by formal duality, we obtain:
\[
H^{1}_E(Y,\Omega^i_Y(\log E)) \cong R^{d-1}\pi_{*}\Omega^{d-i}_Y(\log E)(-E),
\]
which shows that the extension property follows from Steenbrink-type vanishing.  
In particular, Theorems \ref{Introthm:F-inj}, \ref{Introthm:SFR} and \ref{Introthm:klt} yield the logarithmic extension property for one-forms on isolated
rational singularities that has dimension $\geq 4$ or dimension $\geq 3$ and $F$-injective, 
on isolated 3-dimensional strongly $F$-regular singularities, and on isolated 3-dimensional klt threefolds in $p>41$, when $E$ supports a $\pi$-ample divisor.
However, stronger results have already been established in \cite[Theorem D]{Kaw4}, \cite[Theorem A]{Kawakami-Sato}, and \cite[Theorem B]{Kawakami-Sato} respectively.  
For example, it is shown in \cite[Theorem A]{Kawakami-Sato} that the logarithmic extension property holds in all dimensions for strongly $F$-regular varieties and all proper birational morphisms $f\colon Z\to X$, without any assumption on the codimension of the singular locus.
\end{rem}

%%%%%%%%%%%%%%%%%%%%%%%%%%%%%%%%%%%%%%%%%%%%%%%%%%%%%%%%%%%%%%%%%%%%%%%%%%%%
%%%%%%%%%%%%%%%%%%%%%%%%%%%%%%%%%%%%%%%%%%%%%%%%%%%%%%%%%%%%%%%%%%%%%%%%%%%%
%%%%%%%%%%%%%%%%%%%%%%%%%%%%%%%%%%%%%%%%%%%%%%%%%%%%%%%%%%%%%%%%%%%%%%%%%%%%
%%%%%%%%%%%%%%%%%%%%%%%%%%%%%%%%%%%%%%%%%%%%%%%%%%%%%%%%%%%%%%%%%%%%%%%%%%%%
%%%%%%%%%%%%%%%%%%%%%%%%%%%%%%%%%%%%%%%%%%%%%%%%%%%%%%%%%%%%%%%%%%%%%%%%%%%%
%%%%%%%%%%%%%%%%%%%%%%%%%%%%%%%%%%%%%%%%%%%%%%%%%%%%%%%%%%%%%%%%%%%%%%%%%%%%

\section*{Acknowledgements}
The author expresses his gratitude to Jefferson Baudin and Linus Rösler for valuable conversations.
He was supported by JSPS KAKENHI Grant number JP24K16897.

% \bibliography{hoge.bib}

\begin{thebibliography}{GKKP11}

\bibitem[ABL22]{ABL}
Emelie Arvidsson, Fabio Bernasconi, and Justin Lacini.
\newblock On the {K}awamata-{V}iehweg vanishing theorem for log del {P}ezzo surfaces in positive characteristic.
\newblock {\em Compos. Math.}, 158(4):750--763, 2022.

\bibitem[BK23]{Bernasconi-Kollar}
Fabio Bernasconi and J\'anos Koll\'ar.
\newblock Vanishing theorems for three-folds in characteristic {$p>5$}.
\newblock {\em Int. Math. Res. Not. IMRN}, (4):2846--2866, 2023.

\bibitem[BKR25]{Baudin-Kawakami-Rysler}
Jefferson Baudin, Tatsuro Kawakami, and Linus R\"osler.
\newblock On {G}rauert–{R}iemenschneider vanishing for {C}ohen--{M}acaulay schemes of klt type.
\newblock arXiv:2506.21381, 2025.

\bibitem[CR11]{CR11}
Andre Chatzistamatiou and Kay R\"{u}lling.
\newblock Higher direct images of the structure sheaf in positive characteristic.
\newblock {\em Algebra Number Theory}, 5(6):693--775, 2011.

\bibitem[CR15]{CR15}
Andre Chatzistamatiou and Kay R\"ulling.
\newblock Vanishing of the higher direct images of the structure sheaf.
\newblock {\em Compos. Math.}, 151(11):2131--2144, 2015.

\bibitem[GKK10]{GKK}
Daniel Greb, Stefan Kebekus, and S\'{a}ndor~J. Kov\'{a}cs.
\newblock Extension theorems for differential forms and {B}ogomolov-{S}ommese vanishing on log canonical varieties.
\newblock {\em Compos. Math.}, 146(1):193--219, 2010.

\bibitem[GKKP11]{GKKP}
Daniel Greb, Stefan Kebekus, S\'{a}ndor~J. Kov\'{a}cs, and Thomas Peternell.
\newblock Differential forms on log canonical spaces.
\newblock {\em Publ. Math. Inst. Hautes \'{E}tudes Sci.}, 114:87--169, 2011.

\bibitem[Har98]{Hara98}
Nobuo Hara.
\newblock A characterization of rational singularities in terms of injectivity of {F}robenius maps.
\newblock {\em Amer. J. Math.}, 120(5):981--996, 1998.

\bibitem[Kaw22]{Kaw4}
Tatsuro Kawakami.
\newblock Extendability of differential forms via {C}artier operators.
\newblock \url{https://arxiv.org/abs/2207.13967v4}, 2022.
\newblock To appear in \emph{J.~Eur.~Math.~Soc.~(JEMS)}.

\bibitem[Kaw24]{Kaw6}
Tatsuro Kawakami.
\newblock Steenbrink-type vanishing for surfaces in positive characteristic.
\newblock {\em Bull. Lond. Math. Soc.}, 56(11):3484--3501, 2024.

\bibitem[Kov14]{Kovacs(Steenbrink)}
S\'andor~J. Kov\'acs.
\newblock Steenbrink vanishing extended.
\newblock {\em Bull. Braz. Math. Soc. (N.S.)}, 45(4):753--765, 2014.

\bibitem[KS25]{Kawakami-Sato}
Tatsuro Kawakami and Kenta Sato.
\newblock Extending one-forms on {$F$}-regular singularities.
\newblock \url{https://arxiv.org/abs/2502.17148}, 2025.

\bibitem[KTT{\etalchar{+}}22]{KTTWYY1}
Tatsuro Kawakami, Teppei Takamatsu, Hiromu Tanaka, Jakub Witaszek, Fuetaro Yobuko, and Shou Yoshikawa.
\newblock Quasi-{F}-splittings in birational geometry.
\newblock \url{https://arxiv.org/abs/2208.08016}, 2022.
\newblock To appear in \emph{Ann. Sci.~\'Ec.~Norm.~Sup\'er.~(4)}.

\bibitem[KTT{\etalchar{+}}24]{KTTWYY2}
Tatsuro Kawakami, Teppei Takamatsu, Hiromu Tanaka, Jakub Witaszek, Fuetaro Yobuko, and Shou Yoshikawa.
\newblock Quasi-{F}-splittings in birational geometry {II}.
\newblock {\em Proc. Lond. Math. Soc. (3)}, 128(4):Paper No. e12593, 81, 2024.

\bibitem[KW24a]{Kawakami-Witaszek}
Tatsuro Kawakami and Jakub Witaszek.
\newblock Higher {F}-injective singularities.
\newblock \url{https://arxiv.org/abs/2412.08887}, 2024.

\bibitem[KW24b]{Kollar-Witaszek}
J{\'a}nos Koll{\'a}r and Jakub Witaszek.
\newblock Resolution and alteration with ample exceptional divisor.
\newblock {\em Science China Mathematics}, pages 1--4, 2024.

\bibitem[Muk13]{Mukai}
Shigeru Mukai.
\newblock Counterexamples to {K}odaira's vanishing and {Y}au's inequality in positive characteristics.
\newblock {\em Kyoto J. Math.}, 53(2):515--532, 2013.

\bibitem[SPA25]{stacks-project}
The Stacks Project~Authors.
\newblock The {S}tacks {P}roject.
\newblock \url{https://stacks.math.columbia.edu/}, 2025.

\bibitem[Ste85]{Steenbrink}
J.~H.~M. Steenbrink.
\newblock Vanishing theorems on singular spaces.
\newblock Number 130, pages 330--341. 1985.
\newblock Differential systems and singularities (Luminy, 1983).

\end{thebibliography}
% \bibliographystyle{alpha}
\newcommand{\etalchar}[1]{$^{#1}$}

\bigskip

\end{document}